\date{}
\renewcommand{\uppercasenonmath}[1]{}
\numberwithin{equation}{section} \theoremstyle{plain}
\newtheorem*{thm*}{Main Theorem}
\newtheorem{thm}{Theorem}[section]
\newtheorem{cor}[thm]{Corollary}
\newtheorem*{cor*}{Corollary}
\newtheorem{lem}[thm]{Lemma}
\newtheorem*{lem*}{Lemma}
\newtheorem{prop}[thm]{Proposition}
\newtheorem*{prop*}{Proposition}
\newtheorem*{rem*}{Remark}
\newtheorem{exa}[thm]{Example}
\newtheorem*{exa*}{Example}
\newtheorem{df}[thm]{Definition}
\newtheorem*{df*}{Definition}
\newtheorem*{conj*}{Conjecture}
\newtheorem*{Fa*}{Fact}
\newtheorem*{ack*}{ACKNOWLEDGEMENTS}
\newcommand{\pf}{\noindent\begin {proof}}
\newcommand{\epf}{\end{proof}}
\newcommand{\Ext}{\mbox{\rm Ext}}
\newcommand{\Hom}{\mbox{\rm Hom}}
\newcommand{\Tor}{\mbox{\rm Tor}}
\def\Im{\mathop{\rm Im}\nolimits}
\def\Ker{\mathop{\rm Ker}\nolimits}
\def\Coker{\mathop{\rm Coker}\nolimits}
\def\Tr{\mathop{\rm Tr}\nolimits}
\def\cTr{\mathop{\rm cTr}\nolimits}
\def\coOmega{\mathop{{\rm co}\Omega}\nolimits}
\def\mod{\mathop{\rm mod}\nolimits}
\def\Mod{\mathop{\rm Mod}\nolimits}
\def\cograde{\mathop{\rm cograde}\nolimits}
\def\id{\mathop{\rm id}\nolimits}
\def\inf{\mathop{\rm inf}\nolimits}
\def\add{\mathop{\rm add}\nolimits}
\def\Add{\mathop{\rm Add}\nolimits}
\def\Hom{\mathop{\rm Hom}\nolimits}
\def\Ext{\mathop{\rm Ext}\nolimits}
\def\lim{\mathop{\underrightarrow{\rm lim}}\nolimits}
\def\cograde{\mathop{\rm cograde}\nolimits}
\def\rank{\mathop{\rm rank}\nolimits}
\def\GInj{\mathop{\rm GInj}\nolimits}
\def\syz{\mathop{\rm syz}\nolimits}
\def\std{\mathop{\rm std}\nolimits}
\begin{document}
\begin{center}
{\large  \bf   Homological Aspects of the Dual Auslander Transpose}

\vspace{0.5cm}

Xi Tang\\
{\tiny{\it College of Science, Guilin University of Technology, Guilin 541004, Guangxi Province, P.R. China\\
E-mail: tx5259@sina.com.cn\\}}

\bigskip

Zhaoyong Huang\\
{\tiny{\it Department of Mathematics, Nanjing University, Nanjing 210093, Jiangsu Province, P.R. China\\
E-mail: huangzy@nju.edu.cn\\}}
\end{center}

\bigskip
\centerline { \bf  Abstract}
\bigskip
\leftskip10truemm \rightskip10truemm \noindent As a dual of the
Auslander transpose of modules, we introduce and study the
cotranspose of modules with respect to a semidualizing module $C$.
Then using it we introduce $n$-$C$-cotorsionfree modules, and show
that $n$-$C$-cotorsionfree modules possess many dual properties of
$n$-torsionfree modules. In particular, we show that
$n$-$C$-cotorsionfree modules are useful in characterizing the Bass
class and investigating the approximation theory for modules.
Moreover, we study $n$-cotorsionfree modules over artin algebras and
answer negatively an open question of Huang and Huang posed in 2012.
\vbox to 0.3cm{}\\
{\it Key Words:} Cotranspose; $n$-$C$-cotorsionfree modules, $n$-$C$-cospherical modules,
$n$-syzygy modules, Cograde.\\
{\it 2000 Mathematics Subject Classification:} 16E05, 16E30, 16E10. \leftskip0truemm \rightskip0truemm
\bigskip
\section { \bf Introduction}
\bigskip
It is well known that the Auslander-Reiten theory plays a very
important role in representation theory of artin algebras and
homological algebra. One of the most powerful tools in this theory
is the Auslander transpose. With the aid of the Auslander transpose,
as a special case of $n$-syzygy modules over left and right noether
rings, Auslander and Bridger \cite{Au} introduced $n$-torsionfree
modules and obtained an approximation theory for finitely generated
modules when $n$-syzygy modules and $n$-torsionfree modules
coincide. Ever since then many authors have studied the homological
properties of these modules and related modules; see \cite{Au},
\cite{AR1}, \cite{AR2}, \cite{AR3}, \cite{HH}, \cite{HT}, \cite{JS},
\cite{R}, \cite{STMY}, \cite{STY}, \cite{T}, and so on. Based on
these references, two natural questions arise: (1) How to dualize
the Auslander transpose of modules appropriately? (2) Does the
notion of $n$-torsionfree modules have its dual as many notions in
classical homological algebra do? The aim of this paper is to study
these two questions, and we will define and investigate the
cotranspose of modules and $n$-cotorsionfree modules.

The paper is organized as follows.

In Section 2 we give some terminology and some preliminary results, and we also
introduce the notions of cotorsionless modules and coreflexive modules.

In Section 3 we introduce the cotranspose of
modules with respect to a semidualizing bimodule $C$, and
using it we introduce $n$-$C$-cotorsionfree modules as a dual of $n$-($C$-)torsionfree modules
in \cite{Au} and \cite{T}. We show that
$n$-$C$-cotorsionfree modules possess many dual properties of
$n$-($C$-)torsionfree modules. For example, we prove that a module is $n$-$C$-cotorsionfree
if and only if it admits some special proper resolutions of length at least $n$. Then, as an application,
we deduce that the Bass class with respect to $C$ coincides with the intersection of the class
of $\infty$-$C$-cotorsionfree modules and that of $\infty$-$C$-cospherical modules. As another application,
we get a dual version of the approximation theorem for
finitely generated modules over left and right noetherian rings in \cite[Proposition 2.21]{Au}
and its semiduazlizing version in \cite[Theorem A]{T}.

In Section 4 we generalize the cograde of finitely generated modules in \cite{O}
to general modules, and prove that for a ring $R$, the $n$-cosyzygy of a left $R$-module $M$ is
$n$-$C$-cotorsionfree if and only if the cograde of $\Ext_R^i(C,M)$ is at least $i-1$ for any $1\leq i\leq n$.
This result can be regarded as a dual version of \cite[Proposition 2.26]{Au}.

In Section 5, we focus on studying some special finitely generated
$n$-$C$-cotorsionfree modules (called $n$-cotorsionfree modules)
over artin algebras. In this case, we first show that the ordinary
Matlis duality induces a duality between the cotranspose (resp.
$n$-cotorsionfree modules) and the transpose (resp. $n$-torsionfree
modules). Then we obtain an equivalent characterization
when ($^{\bot}\mathcal{GI}, \mathcal{GI})$ forms a cotorsion pair,
where $\mathcal{GI}$ denotes the class of finitely generated
Gorenstein injective modules and $^{\bot}\mathcal{GI}$ is its left
orthogonal class. Finally, we give an example to illustrate that the
class of $\infty$-torsionfree modules is not closed under kernels of
epimorphisms in general. It answers negatively an open question of Huang
and Huang (\cite{HH}).

\section {\bf Preliminaries}
\bigskip

Throughout this paper, $R$ and $S$ are
fixed associative rings with unites. We use $\Mod R$ (resp.
$\Mod S^{op}$) to denote the class of left $R$-modules (resp. right
$S$-modules).

\begin{df} \label{df: 2.1} {\rm (\cite{HW}). An ($R$-$S$)-bimodule $_RC_S$ is called
\textit{semidualizing} if
\begin{enumerate}
\item[(a1)] $_RC$ admits a degreewise finite $R$-projective resolution.
\item[(a2)] $C_S$ admits a degreewise finite $S$-projective resolution.
\item[(b1)] The homothety map $_RR_R\stackrel{_R\gamma}{\rightarrow} \Hom_{S}(C,C)$ is an isomorphism.
\item[(b2)] The homothety map $_SS_S\stackrel{\gamma_S}{\rightarrow} \Hom_{R}(C,C)$ is an isomorphism.
\item[(c1)] $\Ext_{R}^{\geqslant 1}(C,C)=0.$
\item[(c2)] $\Ext_{S}^{\geqslant 1}(C,C)=0.$
\end{enumerate}}
\end{df}

From now on, $_RC_S$ is a semidualizing bimodule. We write $(-)^*=\Hom(-,C)$ and $(-)_*=\Hom(C,-)$.
For a module $M\in \Mod R$, we have the following two canonical valuation homomorphisms:
$$\sigma_M: M\rightarrow M^{**}$$ defined by
$\sigma_M(x)(f)=f(x)$ for any $x\in M$ and $f\in M^*$, and
$$\theta_M:
C\otimes_SM_*\rightarrow M$$ defined by $\theta_M(x\otimes
f)=f(x)$ for any $x\in C$ and $f\in M_*$.

\begin{df} \label{df: 2.2} {\rm (\cite{HW}). The \textit{Bass class} $\mathcal {B}_C(R)$
with respect to $C$ consists of all left $R$-modules $M$ satisfying
\begin{enumerate}
\item[(B1)] $\Ext^{\geqslant 1}_R(C,M)=0,$
\item[(B2)] $\Tor_{\geqslant 1}^S(C,\Hom_R(C,M))=0$, and
\item[(B3)] $\theta_M$ is an isomorphism in $\Mod R$.
\end{enumerate}}
\end{df}

Let $M$ be a finitely presented left $R$-module and $$P_1 \buildrel
{f_0} \over \to P_0 \to M \to 0$$ a finitely generated projective
presentation of $M$. Then $\Tr_CM:=\Coker {f_0}^{*}$ is called the
\textit{(Auslander) transpose with respect to $C$} (\cite{HT}). When $R=S$ and
$_RC_S={_RR_R}$, the Auslander transpose with respect to $C$ is
just the \textit{Auslander transpose} (\cite{Au}).

\begin{prop} \label{prop: 2.3} {\rm (\cite[Proposition 2.6]{Au} and \cite[Lemma 2.1]{HT}).}
Let $M$ be a finitely presented left $R$-module. Then there exists an exact sequence:
$$0\rightarrow \Ext_{S}^1(\Tr_CM,C)\rightarrow M\buildrel {\sigma_M}\over \longrightarrow
M^{**}\rightarrow \Ext_{S}^2(\Tr_CM,C)\rightarrow 0.$$
\end{prop}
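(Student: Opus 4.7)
The plan is to build the four-term sequence by taking a finite projective presentation $P_1\xrightarrow{f_0}P_0\to M\to 0$, dualizing into $C$ twice, and then reading off $\ker\sigma_M$ and $\coker\sigma_M$ from a diagram chase. Concretely, applying $(-)^{*}=\Hom_R(-,C)$ to the presentation yields, by the very definition of $\Tr_C M$, the four-term exact sequence
$$0\to M^{*}\to P_0^{*}\xrightarrow{f_0^{*}} P_1^{*}\to \Tr_CM\to 0.$$
I will split this at $N:=\operatorname{im}f_0^{*}$ into two short exact sequences
$$(A)\ \ 0\to M^{*}\to P_0^{*}\to N\to 0,\qquad (B)\ \ 0\to N\to P_1^{*}\to \Tr_CM\to 0,$$
and apply $(-)^{*}=\Hom_S(-,C)$ to each, producing the two long exact Ext sequences that will be spliced together.

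The first key input is the vanishing $\Ext_S^{\geq 1}(P_i^{*},C)=0$ for $i=0,1$. Since $P_i$ is finitely generated projective over $R$, it is a direct summand of some $R^{n_i}$, so $P_i^{*}=\Hom_R(P_i,C)$ is a direct summand of $C^{n_i}$; the semidualizing axiom (c2) then gives the claimed Ext vanishing. The second key input is that the evaluation maps $\sigma_{P_i}:P_i\to P_i^{**}$ are isomorphisms; this follows from axiom (b1) and additivity, since $\sigma_R$ is the homothety map. With these in hand, the long exact sequence for (B) collapses to
$$0\to(\Tr_CM)^{*}\to P_1^{**}\to N^{*}\to \Ext_S^1(\Tr_CM,C)\to 0$$
together with an isomorphism $\Ext_S^1(N,C)\cong\Ext_S^2(\Tr_CM,C)$, while the long exact sequence for (A) collapses to
$$0\to N^{*}\to P_0^{**}\to M^{**}\to \Ext_S^1(N,C)\to 0.$$

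To finish, I will compose the maps $P_1^{**}\to N^{*}\hookrightarrow P_0^{**}$ to recover $f_0^{**}$, and compare the original presentation with its double dual via the commutative square whose vertical arrows are $\sigma_{P_0},\sigma_{P_1},\sigma_M$. Because $\sigma_{P_0}$ and $\sigma_{P_1}$ are isomorphisms, $M$ is identified with $\coker f_0^{**}=P_0^{**}/\operatorname{im}f_0^{**}$, and under this identification $\sigma_M$ factors as $P_0^{**}/\operatorname{im}f_0^{**}\twoheadrightarrow P_0^{**}/N^{*}\hookrightarrow M^{**}$. A short diagram chase then reads $\ker\sigma_M$ as $N^{*}/\operatorname{im}(P_1^{**}\to N^{*})\cong\Ext_S^1(\Tr_CM,C)$ and $\coker\sigma_M$ as $\Ext_S^1(N,C)\cong\Ext_S^2(\Tr_CM,C)$, assembling the desired sequence.

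The only delicate step, and the one I expect to demand the most care, is the final splicing: making sure that the natural identification $M\cong\coker f_0^{**}$ really makes the composite $M\to\coker f_0^{**}\to M^{**}$ agree with $\sigma_M$, so that the kernel and cokernel computed from the diagram are genuinely those of $\sigma_M$ and not merely of some abstractly induced map. The rest is a standard, if careful, bookkeeping of two long exact $\Ext$ sequences.
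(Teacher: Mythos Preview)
Your proof is correct. The paper does not actually prove Proposition~2.3; it simply states the result with citations to \cite[Proposition~2.6]{Au} and \cite[Lemma~2.1]{HT}. Your argument---dualizing the presentation twice, splitting the resulting four-term sequence at $N=\operatorname{im}f_0^{*}$, using the semidualizing axioms to kill $\Ext_S^{\geq 1}(P_i^{*},C)$ and to make each $\sigma_{P_i}$ an isomorphism, and then splicing the two long exact sequences---is the standard one, and is precisely the dual of the proof the paper \emph{does} give for the dual statement, Proposition~3.2. There the authors take a minimal injective copresentation, apply $(-)_*$, factor the resulting map as epic followed by monic (your $N$ corresponds to their $\operatorname{Im}{f^0}_*$), and read off $\Ker\theta_M$ and $\Coker\theta_M$ via the snake lemma; you do the same bookkeeping but phrased as two long exact $\Ext$ sequences rather than a single snake-lemma diagram. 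Your caution about the final splicing step---checking that the composite really agrees with $\sigma_M$---is well placed and handled correctly by the naturality of $\sigma$.
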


Recall that a module $M\in \Mod R$ is called \emph{$C$-torsionless}
if $\sigma_M$ is a monomorphism, and $M$ is called
\emph{$C$-reflexive} if $\sigma_M$ is an isomorphism. As the duals
of $C$-torsionless modules and $C$-reflexive modules, we introduce
the following

\begin{df} \label{df: 2.4} {\rm A module $M\in \Mod R$ is called
\emph{$C$-cotorsionless} if $\theta_M$ is an epimorphism, and $M$ is
called \emph{$C$-coreflexive} if $\theta_M$ is an isomorphism.}
\end{df}

For a module $M\in \Mod R$, we denote by $\Add_RM$ the subclass of $\Mod R$
consisting of all direct summands of direct sums of copies
of $M$.

\begin{lem} \label{lem: 2.5} The following statements hold.
\begin{enumerate}
\item For any $W\in \Add_R C$, $W$ is $C$-coreflexive, $W_*$ is a projective left $S$-module
and $\Ext_R^{\geq 1}(C,W)=0$.
\item For any injective left $R$-module $I$, $I$ is $C$-coreflexive and $\Tor_{\geqslant 1}^S(C,I_*)=0$.
\end{enumerate}
\end{lem}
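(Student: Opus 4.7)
The plan is to establish (1) directly from the semidualizing axioms and then bootstrap to (2) via a resolution-and-dualize argument. For part (1), I start with the case $W = C$: condition (b2) identifies $C_* = \Hom_R(C,C)$ with ${}_SS$, which is free as a left $S$-module; condition (c1) gives $\Ext_R^{\geq 1}(C,C) = 0$; and the canonical isomorphism $C \otimes_S S \xrightarrow{\sim} C$ is exactly $\theta_C$. For a direct sum $W = C^{(\Lambda)}$, the key input is that ${}_RC$ is finitely generated with a degreewise finite projective resolution by (a1), so $\Hom_R(C,-)$ and each $\Ext_R^i(C,-)$ commute with arbitrary direct sums, while naturality of $\theta$ propagates coreflexivity. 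Since all three conclusions are additive, they pass to direct summands, covering the whole of $\Add_R C$.

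For part (2), I would fix a degreewise finitely generated projective resolution $P_\bullet \to C \to 0$ of $C_S$ in $\Mod S^{op}$, provided by (a2), and apply $\Hom_S(-,C)$. Using (b1) to identify $\Hom_S(C,C) \cong {}_RR$ as left $R$-modules and (c2) to kill positive Ext, I obtain an exact sequence of left $R$-modules
$$0 \to R \to \Hom_S(P_0,C) \to \Hom_S(P_1,C) \to \cdots,$$
each $\Hom_S(P_n,C)$ lying in $\Add_R C$ by part (1). Now $I$ injective makes $\Hom_R(-,I)$ exact, and the natural isomorphism $\Hom_R(\Hom_S(P_n,C), I) \cong P_n \otimes_S I_*$ (evident for $P_n = S$ and extended additively to f.g.\ projectives) converts the dualized sequence into
$$\cdots \to P_1 \otimes_S I_* \to P_0 \otimes_S I_* \to I \to 0.$$
Reading off homology simultaneously yields $\Tor_{\geq 1}^S(C, I_*) = 0$ and, via the factorization of the augmentation through $C \otimes_S I_*$, that $\theta_I$ is an isomorphism.

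The main bookkeeping obstacle is ensuring that the adjunction isomorphism used above is natural in $P_n$ and respects the left $R$-structure inherited from $C$, and that the augmentation in the dualized complex really matches $\theta_I$ once one identifies $P_0 \otimes_S I_* \to C \otimes_S I_*$ as the surjection induced by $P_0 \to C$. Both points reduce to verifying the formulas on $P_n = S$, where they collapse to tautologies; once they are in place, the two assertions of part (2) follow together.
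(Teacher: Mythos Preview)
Your argument is correct. The paper does not prove this lemma directly but simply cites \cite[Lemma 5.1(b) and (c)]{HW}; your self-contained approach---checking the axioms for $W=C$ and extending to $\Add_RC$ by additivity for part (1), then for part (2) dualizing a degreewise finite projective resolution of $C_S$ against $C$ and then against $I$ via the evaluation isomorphism $\Hom_R(\Hom_S(P_n,C),I)\cong P_n\otimes_S I_*$ for finitely generated projective $P_n$---is the standard route and essentially what one finds in the cited reference. The one point worth emphasizing, which you correctly flag, is that the induced isomorphism $C\otimes_S I_*\to I$ must be identified with $\theta_I$; this reduces to the case $P_0=S$, and once checked the Tor-vanishing and $C$-coreflexivity of $I$ fall out together.
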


\begin{proof} (1) follows from \cite[Lemma 5.1(b)]{HW}, and (2) follows from \cite[Lemma 5.1(c)]{HW}.
\end{proof}

\begin{df} \label{df: 2.6}
{\rm  (\cite{SSW}) Let $\mathcal{X}$ be a subclass of $\Mod R$.

(1) An exact sequence $\mathbb{E}$ in $\Mod R$ is
called \emph{$\Hom_R(\mathcal{X},-)$-exact} (resp. \emph{$\Hom_R(-,\mathcal{X})$-exact}) if $\Hom_R(X,\mathbb{E})$
(resp. $\Hom_R(\mathbb{E},X)$) is exact for any $X\in \mathcal{X}$.

(2) An exact sequence
$${\bf X}:=\cdots \rightarrow X_1\rightarrow X_0\rightarrow X^{0}\rightarrow
X^{1}\rightarrow \cdots$$ in $\Mod R$ with $X_i, X^i \in \mathcal{X}$
is called \textit{totally $\mathcal{X}$-acyclic} if it is
$\Hom_{R}(\mathcal{X},-)$-exact and $\Hom_{R}(-,\mathcal{X})$-exact.}
\end{df}

\begin{df} \label{df: 2.7}
{\rm (\cite{EJ}) A module $M\in \Mod R$ is
called \textit{Gorenstein injective}, if there exists a
totally acyclic complex of injective modules
$${\bf I}:=\cdots \rightarrow I_1\rightarrow I_0\rightarrow I^{0}\rightarrow
I^{1}\rightarrow \cdots$$ in $\Mod R$ such that $M\cong\Im (I_0\rightarrow
I^{0})$.}
\end{df}

\section {\bf The cotranspose and $n$-$C$-cotorsionfree modules}
\bigskip

In this section, we introduce and study the cotranspose of modules and $n$-cotorsionfree modules
with respect to the given semidualizing bimodule $_RC_S$.

Let $M\in \Mod R$. We use
$$0\to M \to I^0(M) \buildrel {f^0} \over \to I^1(M) \buildrel
{f^1}\over \to \cdots \buildrel {f^{i-1}} \over \to I^i(M)\buildrel
{f^{i}} \over \to \cdots \eqno{(3.1)}$$ to denote a minimal
injective resolution of $M$ in $\Mod R$. For any $n\geq 1$,
$\coOmega^n(M):= \Im f^{n-1}$ is called the \emph{$n$-th cosyzygy}
of $M$, and in particular, put $\coOmega^{0}(M)=M$. A module in
$\Mod R$ is called \emph{$n$-cosyzygy} if it is isomorphic to the
$n$-th cosyzygy of some module in $\Mod R$.
We introduce the dual notion of the Auslander transpose of modules as follows.

\begin{df} \label{df: 3.1} {\rm For a module $M\in \Mod R$, $\cTr_C M:=\Coker {f^0}_*$
is called the \emph{cotranspose} of $M$ with respect to $_RC_S$.}
\end{df}

The following result is a dual version of Proposition 2.3.

\begin{prop} \label{prop: 3.2} Let $M\in \Mod R$. Then there exists an exact sequence:
{\footnotesize $$0\rightarrow \Tor^S_2(C,\cTr_C M)\rightarrow
C\otimes_SM_*\buildrel {\theta_M}\over \longrightarrow
M\rightarrow \Tor^S_1(C,\cTr_C M)\rightarrow 0.$$}
\end{prop}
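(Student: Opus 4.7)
The plan is to work directly from the minimal injective resolution (3.1) together with the definition $\cTr_C M = \Coker f^0_*$. Applying $(-)_* = \Hom_R(C,-)$ to (3.1) yields the four-term exact sequence
$$0 \to M_* \to I^0(M)_* \xrightarrow{f^0_*} I^1(M)_* \to \cTr_C M \to 0$$
in $\Mod S$. Setting $K := \Im f^0_*$ splits this into two short exact sequences, and I will feed both through $C \otimes_S -$.

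The essential input is Lemma \ref{lem: 2.5}(2), which says that for every injective left $R$-module $I$ the map $\theta_I$ is an isomorphism and $\Tor^S_{\geq 1}(C, I_*) = 0$. Applying this to $I^0(M)$ and $I^1(M)$, the $\Tor$-long exact sequence associated to $0 \to K \to I^1(M)_* \to \cTr_C M \to 0$ collapses to the identifications
$$\Tor^S_2(C, \cTr_C M) \cong \Tor^S_1(C, K) \quad\text{and}\quad \Tor^S_1(C, \cTr_C M) \cong \Ker\bigl(C \otimes_S K \to C \otimes_S I^1(M)_*\bigr),$$
while the $\Tor$-long exact sequence associated to $0 \to M_* \to I^0(M)_* \to K \to 0$ reduces to
$$0 \to \Tor^S_1(C, K) \to C \otimes_S M_* \to C \otimes_S I^0(M)_* \to C \otimes_S K \to 0.$$

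To finish I identify $\Tor^S_1(C, K)$ with $\Ker \theta_M$ and $\Tor^S_1(C, \cTr_C M)$ with $\Coker \theta_M$ by exploiting naturality of $\theta$. Naturality at the inclusion $M \hookrightarrow I^0(M)$ produces a commutative square whose right column is the isomorphism $\theta_{I^0(M)}$; since the bottom arrow $M \hookrightarrow I^0(M)$ is injective, the last displayed sequence yields $\Tor^S_1(C, K) \cong \Ker \theta_M$ and $C \otimes_S K \cong I^0(M)/\Im \theta_M$. Naturality at $f^0$ then identifies the map $C \otimes_S K \to C \otimes_S I^1(M)_*$ with the map $I^0(M)/\Im \theta_M \to I^1(M)$ induced by $f^0$; its kernel is $M/\Im \theta_M = \Coker \theta_M$ because $\Ker f^0 = M$. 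Splicing these identifications produces the asserted exact sequence.

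The only real hurdle is the bookkeeping that ensures the abstract isomorphisms $\Tor^S_2(C, \cTr_C M) \cong \Ker \theta_M$ and $\Tor^S_1(C, \cTr_C M) \cong \Coker \theta_M$ assemble into a sequence whose middle arrow is literally $\theta_M$; this becomes automatic once the two naturality squares for $\theta$ (at $M \hookrightarrow I^0(M)$ and at $f^0$) are in place, so no conceptual difficulty beyond diagram chasing should arise.
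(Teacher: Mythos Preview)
Your proposal is correct and follows essentially the same route as the paper's proof: both apply $(-)_*$ to the minimal injective resolution, split the resulting four-term sequence at $K=\Im f^0_*$, feed the two short exact sequences through $C\otimes_S-$ using Lemma~\ref{lem: 2.5}(2), and then use naturality of $\theta$ (at the inclusion $M\hookrightarrow I^0(M)$ and at $f^0$) to identify $\Tor^S_1(C,K)$ with $\Ker\theta_M$ and $\Ker(C\otimes_S K\to C\otimes_S I^1(M)_*)$ with $\Coker\theta_M$. The paper carries this out by explicitly factoring $f^0=\alpha\cdot\pi$ and $f^0_*=\alpha'\cdot\pi'$, introducing the induced map $h\colon C\otimes_S K\to \Im f^0$, and invoking the snake lemma, whereas you bypass $h$ by directly identifying $C\otimes_S K$ with $I^0(M)/\Im\theta_M$ via $\theta_{I^0(M)}$; but this is the same diagram chase in slightly different packaging, and no substantive difference in strategy.
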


\begin{proof} By applying the functor $(-)_*$ to the minimal injective resolution (3.1) of $M$,
We get an exact sequence: $$0\rightarrow M_*\rightarrow
{I^0(M)}_*\stackrel{{f^0}_*}{\longrightarrow}{I^1(M)}_*\rightarrow
\cTr_C M\rightarrow 0$$ in $\Mod S$. Let $f^0=\alpha\cdot\pi$ (where $\pi:
I^0(M)\twoheadrightarrow \Im f^0$ and $\alpha: \Im
f^0\rightarrowtail I^1(M)$) and ${f^0}_*=\alpha^{'}\cdot\pi^{'}$
(where $\pi^{'}: I^0(M)_*\twoheadrightarrow \Im {f^0}_*$ and
$\alpha^{'}: \Im {f^0}_*\rightarrowtail I^1(M)_*$) be the natural
epic-monic decompositions of $f^0$ and ${f^0}_*$ respectively. Since
$\Tor_{1}^S(C,{I^0(M)}_*)=0$ and $\theta_{I^0(M)}$ is an
isomorphism by Lemma 2.5(2), we have the following commutative diagram
with exact rows: {\tiny
$$\xymatrix{0 \ar[r]& \Tor^S_1(C,\Im {f^0}_*) \ar[r] &
C\otimes_RM_{*} \ar[r] \ar[d]^{\theta_M}&
C\otimes_S{I^0(M)}_{*} \ar[r]^{1_{C}\otimes \pi^{'}}
\ar[d]^{\theta_{I^0(M)}}&
C\otimes_S\Im {f^0}_* \ar[r] \ar@{-->}[d]^{h}& 0\\
& 0 \ar[r]&  M \ar[r] & I^0(M) \ar[r]^{\pi} & \Im f^0 \ar[r]& 0,}$$}
where $h$ is an induced homomorphism. Then
$\pi\cdot\theta_{I^0(M)}=h\cdot(1_{C}\otimes \pi^{'})$. In addition,
by the snake lemma, we have $\Ker \theta_M\cong \Tor^S_1(C,\Im
{f^0}_*)$ and $\Coker \theta_M\cong\Ker h$.

On the other hand, since $\Tor_{1}^S(C,{I^1(M)}_*)=0$ by
Lemma 2.5(2), by applying the functor $C\otimes_S-$ to the exact
sequence:
$$0\rightarrow \Im {f^0}_* \stackrel{\alpha^{'}}{\rightarrow} {I^1(M)}_{*}\rightarrow
\cTr_C M\rightarrow 0,$$ we get the following exact sequence:
{\footnotesize
$$0\rightarrow \Tor^S_1(C,\cTr_C M)\rightarrow
C\otimes_S\Im {f^0}_*\stackrel{1_{C}\otimes
\alpha^{'}}{\longrightarrow}C\otimes_S{I^1(M)}_{*}\rightarrow
C\otimes_S\cTr_C M\rightarrow 0$$} and the isomorphism:
$$\Tor^S_1(C,\Im
{f^0}_*)\cong \Tor^S_2(C,\cTr_C M).$$ Because
$$\xymatrix{C\otimes_S {I^0(M)}_{*} \ar[r]^{1_{C}\otimes
{f^0}_*}
\ar[d]^{\theta_{I^0(M)}} & C\otimes_S {I^1(M)}_{*} \ar[d]^{\theta_{I^1(M)}} \\
I^0(M) \ar[r]^{f^0} & I^1(M)}$$ is a commutative diagram,
$f^0\cdot\theta_{I^0(M)}=\theta_{I^1(M)}\cdot(1_{C}\otimes
{f^0}_*)$. Because ${f^0}_*=\alpha^{'}\cdot\pi^{'}$, $1_{C}\otimes
{f^0}_*=1_{C}\otimes (\alpha^{'}\cdot \pi^{'})=(1_{C}\otimes
\alpha^{'})\cdot (1_{C}\otimes \pi^{'})$. Thus we have $\alpha\cdot
h\cdot(1_{C}\otimes
\pi^{'})=\alpha\cdot\pi\cdot\theta_{I^0(M)}=f^0\cdot\theta_{I^0(M)}
=\theta_{I^1(M)}\cdot(1_{C}\otimes
{f^0}_*)=\theta_{I^1(M)}\cdot(1_{C}\otimes \alpha^{'})\cdot
(1_{C}\otimes \pi^{'})$. Because $1_{C}\otimes \pi^{'}$ is epic,
$\alpha\cdot h=\theta_{I^1(M)}\cdot(1_{C}\otimes \alpha^{'})$.
Notice that $\alpha$ is monic and $\theta_{I^1(M)}$ is an
isomorphism (by Lemma 2.5(2)), so $\Coker \theta_M\cong\Ker h\cong\Ker
(1_{C}\otimes \alpha^{'})\cong \Tor^S_1(C,\cTr_C M)$. Consequently
we obtain the desired exact sequence.
\end{proof}

For any $n\geq 1$, recall from \cite{T} that a finitely presented
left $R$-module $M$ is called \emph{$n$-$C$-torsionfree} if $\Ext_S^i(\Tr_C
M,C)=0$ for any $1\leq i\leq n$. When $R=S$ and $_RC_S={_RR_R}$, an
$n$-$C$-torsionfree module is just an \textit{$n$-torsionfree module} (\cite{Au}).
We introduce the dual notion of $n$-$C$-torsionfree modules as follows.

\begin{df} \label{df: 3.3}
{\rm  Let $M\in \Mod R$ and $n\geq 1$. Then $M$ is called
\emph{$n$-$C$-cotorsionfree} if $\Tor_i^S(C,\cTr_C M)=0$ for any
$1\leq i\leq n$; and $M$ is called \emph{$\infty$-$C$-cotorsionfree}
if it is $n$-$C$-cotorsionfree for all $n$. In particular, every
left $R$-module is 0-$C$-cotorsionfree.}
\end{df}

It is trivial that a left $R$-module is $n$-$C$-cotorsionfree if it
is $m$-cotorsionfree for some $m\geq n$. It is easy to verify that
the class of $n$-$C$-cotorsionfree $R$-modules is closed under
direct summands and finite direct sums.

Note that for any $M\in \Mod R$, there exists an exact sequence:
$$0\rightarrow M_*\rightarrow
{I^0(M)}_*\stackrel{{f^0}_*}{\longrightarrow}{I^1(M)}_*\rightarrow
\cTr_C M\rightarrow 0.$$ The following corollary is an immediate
consequence of Proposition 3.2.

\begin{cor} \label{cor: 3.4} Let $M\in \Mod R$.
Then we have
\begin{enumerate}
\item $M$ is 1-$C$-cotorsionfree if and only if it is $C$-cotorsionless.
\item $M$ is 2-$C$-cotorsionfree if and only if
it is $C$-coreflexive.
\item For any $n\geq 3$,
$M$ is $n$-$C$-cotorsionfree if and only if it is $C$-coreflexive and $\Tor_i^S(C,M_{*})=0$ for any $1\leq i\leq n-2$.
\end{enumerate}
\end{cor}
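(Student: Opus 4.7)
The proof plan is to read off parts (1) and (2) directly from the four-term exact sequence in Proposition 3.2, and then reduce part (3) to a dimension-shifting argument based on the four-term resolution of $\cTr_C M$ displayed just before the corollary, combined with the vanishing of $\Tor^S_{\geq 1}(C, I^i(M)_*)$ guaranteed by Lemma 2.5(2).

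For (1), observe that Proposition 3.2 shows that the cokernel of $\theta_M$ is $\Tor^S_1(C, \cTr_C M)$. Hence $\Tor^S_1(C, \cTr_C M) = 0$ if and only if $\theta_M$ is an epimorphism, which is the definition of $C$-cotorsionless. For (2), the same proposition shows $\Ker \theta_M \cong \Tor^S_2(C, \cTr_C M)$, so simultaneous vanishing of $\Tor^S_1$ and $\Tor^S_2$ is equivalent to $\theta_M$ being an isomorphism, i.e., $M$ being $C$-coreflexive.

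For (3), I would split the four-term exact sequence
$$0 \to M_* \to I^0(M)_* \stackrel{{f^0}_*}{\longrightarrow} I^1(M)_* \to \cTr_C M \to 0$$
into two short exact sequences, using $K := \Im {f^0}_*$:
$$0 \to M_* \to I^0(M)_* \to K \to 0, \qquad 0 \to K \to I^1(M)_* \to \cTr_C M \to 0.$$
Since $I^0(M)$ and $I^1(M)$ are injective $R$-modules, Lemma 2.5(2) yields $\Tor^S_{\geq 1}(C, I^0(M)_*) = 0 = \Tor^S_{\geq 1}(C, I^1(M)_*)$. Applying $C \otimes_S -$ to both short exact sequences and running the long exact sequences then produces the dimension-shifting isomorphism
$$\Tor^S_{j}(C, \cTr_C M) \cong \Tor^S_{j-2}(C, M_*) \quad \text{for all } j \geq 3.$$
Combining this isomorphism with parts (1) and (2) immediately gives the desired equivalence: $n$-$C$-cotorsionfreeness for $n \geq 3$ is equivalent to $C$-coreflexivity of $M$ together with the vanishing of $\Tor^S_i(C, M_*)$ for $1 \leq i \leq n-2$.

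I do not expect any serious obstacle; the only step requiring care is the bookkeeping in the dimension shift, since one has to shift twice (once for each short exact sequence) and verify that the boundary terms from $I^0(M)_*$ and $I^1(M)_*$ actually vanish in the relevant degrees. The cases $j=1,2$ are absorbed into parts (1) and (2) via Proposition 3.2 and therefore need not be forced into the same uniform formula.
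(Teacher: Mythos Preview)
Your proposal is correct and follows precisely the approach the paper intends: parts (1) and (2) are read off from Proposition 3.2, and part (3) is the natural dimension shift along the displayed four-term resolution of $\cTr_C M$ using the vanishing $\Tor^S_{\geq 1}(C, I^i(M)_*)=0$ from Lemma 2.5(2). The paper does not spell this out (it just calls the corollary an ``immediate consequence''), but your argument is exactly the expected filling-in of that remark.
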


\begin{prop} \label{prop: 3.5} Let $0\rightarrow L\rightarrow M\rightarrow N\rightarrow 0$
be a $\Hom_R(C,-)$-exact exact sequence in $\Mod R$ with $L$
$n$-$C$-cotorsionfree. Then $M$ is $n$-$C$-cotorsionfree if and only
if so is $N$.
\end{prop}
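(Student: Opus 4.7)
The plan is to apply the functor $(-)_* = \Hom_R(C,-)$ to the given short exact sequence. Since the sequence is $\Hom_R(C,-)$-exact, we obtain a short exact sequence of right $S$-modules
$$0 \rightarrow L_* \rightarrow M_* \rightarrow N_* \rightarrow 0,$$
and applying $C\otimes_S -$ produces the standard $\Tor$ long exact sequence together with the commutative ladder
$$\xymatrix@R=15pt{
C\otimes_S L_* \ar[r] \ar[d]_{\theta_L} & C\otimes_S M_* \ar[r] \ar[d]_{\theta_M} & C\otimes_S N_* \ar[r] \ar[d]_{\theta_N} & 0 \\
0 \ar[r] & L \ar[r] & M \ar[r] & N \ar[r] & 0.
}$$
My strategy is to combine this ladder with the characterization of $n$-$C$-cotorsionfree modules given by Corollary 3.4 (via $\theta$ and the vanishing of $\Tor_i^S(C,(-)_*)$), splitting on the cases $n=1$, $n=2$, and $n\geq 3$.

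For $n=1$ the hypothesis reduces to $\theta_L$ being an epimorphism. A direct four-term diagram chase in the ladder yields the exactness of $\coker\theta_L \to \coker\theta_M \to \coker\theta_N \to 0$, so $\coker\theta_L=0$ forces $\coker\theta_M \cong \coker\theta_N$, which settles the equivalence at this level.

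For $n\geq 2$ the key observation is that $\theta_L$ being an isomorphism already forces the top row of the ladder to be short exact: the kernel of $C\otimes_S L_* \to C\otimes_S M_*$ equals $\Tor_1^S(C,N_*)$, and commutativity of the leftmost square together with the injectivity of $L\hookrightarrow M$ forces this kernel to lie inside $\ker\theta_L = 0$. The snake lemma then delivers $\ker\theta_M \cong \ker\theta_N$ and $\coker\theta_M \cong \coker\theta_N$, handling $n=2$. For $n\geq 3$, the additional vanishing $\Tor_i^S(C,L_*)=0$ for $1\leq i\leq n-2$ (from Corollary 3.4 applied to $L$), fed into the $\Tor$ long exact sequence arising from $(L_*, M_*, N_*)$, yields isomorphisms $\Tor_i^S(C,M_*) \cong \Tor_i^S(C,N_*)$ throughout this range. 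Invoking Corollary 3.4 once more for $M$ and $N$ then completes the equivalence.

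The main point to watch is the low-index behavior, where the $\Tor$ long exact sequence does not directly yield isomorphisms because of the boundary maps into $C\otimes_S L_*$; this is precisely what is resolved by first deriving $\Tor_1^S(C,N_*)=0$ from the hypothesis on $\theta_L$, which makes both rows of the ladder short exact and licenses the snake lemma to do the bulk of the work.
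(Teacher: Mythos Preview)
Your argument is correct and follows essentially the same route as the paper: apply $(-)_*$ to get a short exact sequence, set up the ladder with the $\theta$-maps, and combine the snake lemma with the Tor long exact sequence and Corollary~3.4. One small slip to fix: the kernel of $C\otimes_S L_* \to C\otimes_S M_*$ is only the \emph{image} of the boundary map $\Tor_1^S(C,N_*)\to C\otimes_S L_*$, not $\Tor_1^S(C,N_*)$ itself, so your diagram chase shows this boundary map is zero (hence the top row is short exact), not that $\Tor_1^S(C,N_*)=0$; fortunately short-exactness of the top row is all you ever use.
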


\begin{proof}  By assumption we
have an exact sequence: $$0\rightarrow
L_*\rightarrow M_*\rightarrow N_*\rightarrow 0$$ in $\Mod S$. Then we get the
following commutative diagram with exact rows:
$$\xymatrix{&C\otimes_SL_{*} \ar[d]^{\theta_L} \ar[r]
& C\otimes_SM_{*} \ar[r] \ar[d]^{\theta_M}
& C\otimes_SN_{*} \ar[d]^{\theta_N} \ar[r] & 0\\
 0 \ar[r]& L \ar[r] & M \ar[r] & N \ar[r]& 0}$$
and the following exact sequence:
$$\Tor_i^S(C,{L}_{*})\rightarrow
\Tor_i^S(C,{M}_{*})\rightarrow \Tor_i^S(C,{N}_{*})\rightarrow
\Tor_{i-1}^S(C,{L}_{*})$$ for any $i\geq 2$. Now the assertion
follows easily from the snake lemma and Corollary 3.4.
\end{proof}

Let $\mathcal{X}$ be a subclass of $\Mod R$ and $M\in \Mod R$.
Following Enochs and Jenda \cite{EJ}, a homomorphism $\phi:
X\rightarrow M$ in $\Mod R$ with $X\in\mathcal{X}$ is called
a \emph{$\mathcal{X}$-precover} of $M$ if $\Hom_R(X^{'}, \phi):
\Hom_R(X^{'}, X)\rightarrow \Hom_R(X^{'}, M)$ is epic for any
$X^{'}\in \mathcal{X}$. A $\mathcal{X}$-precover $\phi: X\rightarrow
M$ is called a \emph{$\mathcal{X}$-cover} if every endomorphism  $g:
X\rightarrow X$ such that $\phi g=\phi$ is an isomorphism. Dually
the notion of an \emph{$\mathcal{X}$-(pre)envelope} of $M$ is
defined. Recall from \cite{H} that an exact
sequence (of finite or infinite length):
$$\cdots \to X_n \to \cdots \to X_1 \to X_0 \to M \to 0$$
in $\Mod R$ is called an \emph{$\mathcal{X}$-resolution} of
$M$ if each $X_i\in \mathcal{X}$. Furthermore, such an
$\mathcal{X}$-resolution is called \emph{proper} if $X_i\twoheadrightarrow
\Im(X_i\to X_{i-1})$ is an $\mathcal{X}$-precover of $\Im(X_i\to
X_{i-1})$ (note: $X_{-1}=M$). Dually, the notion of an \textit{$\mathcal {X}$-coresolution} of $M$ is defined.
The \textit{$\mathcal {X}$-injective
dimension} $\mathcal {X}$-$\id_R(M)$ of $M$ is defined as
$\inf\{n\mid$ there exists an $\mathcal
{X}$-coresolution $0 \to M \to X^0 \to X^1 \to \cdots \to X^n \to 0$
of $M$ in $\Mod R\}$.

In the following result we give an equivalent characterization of
$n$-$C$-cotorsionfree modules in terms of proper $\Add_R C$-resolutions
of modules. It is dual to \cite[Corollary 3.3]{T}.

\begin{prop} \label{prop: 3.6} Let $M\in\Mod R$ and $n\geq 1$.
Then $M$ is $n$-$C$-cotorsionfree if and only if there exists a
proper $\Add_R C$-resolution $W_{n-1}\rightarrow \cdots \rightarrow
W_1\rightarrow W_0\rightarrow M\rightarrow 0$ of $M$ in $\Mod R$.
\end{prop}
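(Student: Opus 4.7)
The plan is to leverage Corollary~3.4 to translate ``$n$-$C$-cotorsionfree'' into concrete Tor-vanishing and coreflexivity conditions on $M$, and then transfer between $M$ and $M_*$ via the pair $(C\otimes_S-,\Hom_R(C,-))$, using Lemma~2.5 to ensure that modules in $\Add_R C$ behave well under these functors. Concretely, Corollary~3.4 says that $M$ is $n$-$C$-cotorsionfree precisely when $\theta_M$ is an isomorphism (or, when $n=1$, just an epimorphism) and $\Tor_i^S(C,M_*)=0$ for $1\leq i\leq n-2$, so the task reduces to tracking these invariants in both directions.

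For the ``if'' direction, I would assume given a proper $\Add_R C$-resolution $W_{n-1}\to\cdots\to W_0\to M\to 0$. By properness, applying $(-)_*$ preserves exactness and yields an exact sequence $(W_{n-1})_*\to\cdots\to(W_0)_*\to M_*\to 0$ in $\Mod S$ whose terms are projective by Lemma~2.5(1). Applying $C\otimes_S-$ and using the natural isomorphisms $\theta_{W_i}\colon C\otimes_S(W_i)_*\xrightarrow{\cong} W_i$ from Lemma~2.5(1), the resulting complex of tensor products is isomorphic to $W_{n-1}\to\cdots\to W_0\to 0$. Comparing the homology of these two isomorphic complexes then yields that $\theta_M$ is an isomorphism (from $H_0$, when $n\geq 2$) and that $\Tor_i^S(C,M_*)=0$ (from $H_i$, for $1\leq i\leq n-2$). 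The case $n=1$ is handled by a direct diagram chase showing $\theta_M$ is an epimorphism using that $\theta_{W_0}$ is an isomorphism together with the epimorphisms $(W_0)_*\to M_*$ and $W_0\to M$. Corollary~3.4 then concludes.

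For the ``only if'' direction, I would construct the resolution from a projective resolution of $M_*$. Pick $\cdots\to F_1\to F_0\to M_*\to 0$ with each $F_i$ free in $\Mod S$, set $W_i:=C\otimes_S F_i\in\Add_R C$, take $1_C\otimes(F_{i+1}\to F_i)$ as internal differentials, and define $W_0\to M$ by $\theta_M\circ(1_C\otimes(F_0\to M_*))$. Exactness of the truncated sequence $W_{n-1}\to\cdots\to W_0\to M\to 0$ holds at $M$ because $\theta_M$ is epi, at each interior $W_i$ for $1\leq i\leq n-2$ because the relevant homology is $\Tor_i^S(C,M_*)=0$, and at $W_0$ because $\theta_M$ is an isomorphism (when $n\geq 2$). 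Properness reduces, via the natural identifications $\Hom_R(C,W_i)\cong F_i$ and $\Hom_R(C,\ker(W_{i-1}\to W_{i-2}))\cong\ker(F_{i-1}\to F_{i-2})$ (the latter from left exactness of $\Hom_R(C,-)$), to the surjections $F_i\twoheadrightarrow\ker(F_{i-1}\to F_{i-2})$ inherent in the projective resolution $F_\bullet$.

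The main technical obstacle is that $C\otimes_S-$ is not exact in general, so establishing that the complex $W_\bullet\to M$ obtained by tensoring $F_\bullet\to M_*$ with $C$ remains exact requires precisely the Tor-vanishing and coreflexivity hypotheses packaged in Corollary~3.4; once this translation between $\Mod S$-data about $M_*$ and $\Mod R$-data about $M$ is secured, the remaining verifications are routine applications of left/right exactness and the naturality of $\theta$.
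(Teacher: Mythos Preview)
Your proposal is correct and takes a genuinely different route from the paper's argument. The paper proceeds by induction on $n$: for $n=1$ it cites \cite[Proposition~5.3]{HW} to obtain an $\Add_R C$-precover and combines this with the epimorphism $C^{(X)}\twoheadrightarrow M$ coming from $\theta_M$; for $n\geq 2$ it peels off one term of the resolution at a time, showing via the snake lemma and Corollary~3.4 that the kernel $N=\Ker(W_0\to M)$ is $(n{-}1)$-$C$-cotorsionfree, and then invokes the induction hypothesis. Your approach is global: you manufacture the entire resolution at once from a projective resolution $F_\bullet\to M_*$ by tensoring with $C$, and you read off both the exactness of $W_\bullet\to M$ and its properness directly from the adjunction $(C\otimes_S-,\Hom_R(C,-))$. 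This makes the correspondence between proper $\Add_R C$-resolutions of $M$ and projective resolutions of $M_*$ completely transparent, and it avoids the external citation for precover existence. One point you pass over silently deserves a sentence of justification: the ``natural identification'' $\Hom_R(C,W_i)\cong F_i$ is the unit $\eta_{F_i}\colon F_i\to\Hom_R(C,C\otimes_S F_i)$ of the adjunction, and for free $F_i=S^{(X)}$ this is an isomorphism precisely because $_RC$ is finitely presented (condition~(a1) in Definition~2.1), so that $\Hom_R(C,-)$ commutes with the direct sum $C^{(X)}$. Once this is noted, the properness verification (including the base case $(W_0)_*\to M_*$, which corresponds to the augmentation $F_0\twoheadrightarrow M_*$ via the triangle identity) goes through exactly as you indicate.
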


\begin{proof} We proceed by induction on $n$.

Let $n=1$ and $M$ be 1-$C$-cotorsionfree. Then $\theta_M$ is epic by Corollary 3.4.
Since  there exists an epimorphism $S^{(X)}\twoheadrightarrow
M_{*}$. So we get an epimorphism $C^{(X)}\twoheadrightarrow
C\otimes_SM_{*}$, which induces an epimorphism
$C^{(X)}\twoheadrightarrow M$ because $\theta_M$ is epic. By
\cite[Proposition 5.3]{HW}, every module in $\Mod R$ admits an
$\Add_RC$-precover. It follows that $M$ admits an epic $\Add_R
C$-precover. Conversely, let $W_0\twoheadrightarrow M$ be an epic $\Add_R
C$-precover of $M$. Because $\theta_{W_0}$ is an isomorphism by
Lemma 2.5(2), from the following commutative diagram with exact rows:
$$\xymatrix{C\otimes_S{W_0}_*
\ar[r] \ar[d]^{\theta_{W_0}}& C\otimes_SM_* \ar[d]^{\theta_M} \ar[r] & 0\\
W_0 \ar[r] &  M \ar[r]& 0}$$ we get that $\theta_M$ is epic and $M$
is 1-$C$-cotorsionfree.

Let $n=2$ and $M$ be 2-$C$-cotorsionfree. By the above argument, there exists an
exact sequence $0\rightarrow N\rightarrow W_0\rightarrow
M\rightarrow 0$ in $\Mod R$ with $W_0\twoheadrightarrow M$
an $\Add_R C$-precover of $M$. Then we have the following commutative
diagram with exact rows:
$$\xymatrix{& C\otimes_SN_{*} \ar[r] \ar[d]^{\theta_{N}}& C\otimes_S{W_0}_*
\ar[r] \ar[d]^{\theta_{W_0}}& C\otimes_SM_{*} \ar[r] \ar[d]^{\theta_{M}}&0 \\
0\ar[r]& N \ar[r] & W_0\ar[r]& M \ar[r] & 0.}$$ Because both
$\theta_{W_0}$ and $\theta_{M}$ are isomorphisms by Lemma 2.5(1) and
Corollary 3.4(2), $\theta_{N}$ is epic by the snake lemma, and hence
$N$ is 1-$C$-cotorsionfree by Corollary 3.4(1). It follows from the above argument
that $N$ admits an epic $\Add_R C$-precover $W_1\twoheadrightarrow
N$. Then the spliced sequence $W_1\rightarrow W_0\rightarrow
M\rightarrow 0$ is as desired. Conversely, let $W_1\rightarrow W_0\rightarrow M\rightarrow 0$ be a
proper $\Add_R C$-resolution of $M$. Put $N=\Ker(W_0\rightarrow M)$.
Then $N$ is 1-$C$-cotorsionfree by the above argument, and so $\theta_N$ is epic by
Corollary 3.4(1). Now the commutative diagram above implies that
$\theta_{M}$ is an isomorphism. Thus $M$ is 2-$C$-cotorsionfree by
Corollary 3.4(2).

Now suppose that $n\geq 3$ and $M$ is $n$-$C$-cotorsionfree. Then $\theta_M$ is an isomorphism
and $\Tor_i^S(C,M_{*})$ $=0$ for any $1\leq i\leq n-2$ by Corollary
3.4(3). In addition, by the induction hypothesis there exists an exact sequence
$0\rightarrow N\rightarrow W_0\rightarrow M\rightarrow 0$ in $\Mod
R$ with $W_0\in \Add_R C$ such that $0\rightarrow N_{*}\rightarrow
{W_0}_{*}\rightarrow M_{*}\rightarrow 0$ is also exact with
${W_0}_{*}$ projective. Then $\Tor^S_{i}(C,N_{*})\cong
\Tor^S_{i+1}(C,M_{*})=0$ for $1\leq i \leq n-3$, and we have the
following commutative diagram with exact rows:
$$\xymatrix{0\ar[r]&C\otimes_SN_{*} \ar[d]^{\theta_{N}}
\ar[r] & C\otimes_S{W_0}_{*} \ar[r] \ar[d]^{\theta_{W_0}}
& C\otimes_SM_{*} \ar[d]^{\theta_M} \ar[r] & 0\\
0 \ar[r]& N \ar[r] & W_0 \ar[r] & M \ar[r]& 0.}$$ Because
$\theta_{W_0}$ is an isomorphism by Lemma 2.5(1), $\theta_N$ is also an
isomorphism. Thus $N$ is $(n-1)$-$C$-cotorsionfree by Corollary 3.4(3)
and therefore the assertion follows from the induction hypothesis.

Conversely, assume that there exists a proper $\Add_R C$-resolution
$W_{n-1}\rightarrow \cdots \rightarrow W_1\rightarrow W_0\rightarrow
M\rightarrow 0$ of $M$ in $\Mod R$. Put $N=\Im(W_1\to W_0)$. Then
$0\rightarrow N_{*}\rightarrow {W_0}_{*}\rightarrow M_{*}\rightarrow
0$ is exact with ${W_0}_{*}$ projective. Because $N$ is
$(n-1)$-$C$-cotorsionfree by the induction hypothesis, $\theta_N$ is
an isomorphism and $\Tor_i^S(C,N_{*})=0$ for any $1\leq i\leq n-3$
by Corollary 3.4(3).

Consider the following commutative diagram with exact rows:
$$\xymatrix{&C\otimes_SN_{*} \ar[d]^{\theta_{N}}
\ar[r] & C\otimes_S{W_0}_{*} \ar[r] \ar[d]^{\theta_{W_0}}
& C\otimes_SM_{*} \ar[d]^{\theta_M} \ar[r] & 0\\
0 \ar[r]& N \ar[r] & W_0 \ar[r] & M \ar[r]& 0.}$$ Because
$\theta_{W_0}$ is an isomorphism by Lemma 2.5(1), $\theta_{M}$ is an
isomorphism and $0\to C\otimes_SN_{*} \to C\otimes_S{W_0}_{*}
\to C\otimes_SM_{*} \to 0$ is exact. So $\Tor_1^S(C,M_{*})=0$ and
$\Tor_{i+1}^S(C,M_{*})\cong \Tor_i^S(C,N_{*})=0$ for any $1\leq
i\leq n-3$, that is, $\Tor_{i}^S(C,M_{*})=0$ for any $1\leq i\leq
n-2$. Thus $M$ is $n$-$C$-cotorsionfree by Corollary 3.4(3).
\end{proof}

As an immediate consequence of Proposition 3.6 we have the following

\begin{cor} \label{cor: 3.7} For a module $M\in\Mod R$, the following statements are equivalent.
\begin{enumerate}
\item $M$ is 1-$C$-cotorsionfree (that is, $M$ is $C$-cotorsionless).
\item There exists an exact sequence $0\rightarrow N\rightarrow
W\rightarrow M\rightarrow 0$ in $\Mod R$ with $W\in\Add_R C$
and $\Ext_R^1(C,N)=0$.
\item There exists an
epimorphism $W\twoheadrightarrow M$ in $\Mod R$ with $W\in\Add_RC$.
\end{enumerate}
\end{cor}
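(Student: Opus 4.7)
The plan is to read off (1) $\Leftrightarrow$ (3) from the $n=1$ case of Proposition 3.6 and its proof, and then extract (2) by analyzing the kernel of the resulting epimorphism.

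For (1) $\Rightarrow$ (2), I would invoke Proposition 3.6 at $n=1$ to produce an epic $\Add_R C$-precover $W\twoheadrightarrow M$, and set $N=\Ker(W\rightarrow M)$. Applying $\Hom_R(C,-)$ to the short exact sequence $0\rightarrow N\rightarrow W\rightarrow M\rightarrow 0$ gives
$$0\rightarrow N_* \rightarrow W_* \rightarrow M_* \rightarrow \Ext_R^1(C,N)\rightarrow \Ext_R^1(C,W).$$
By Lemma 2.5(1), $\Ext_R^1(C,W)=0$. Moreover, the $\Add_R C$-precover property, tested against the object $C$ itself, is exactly the statement that $W_*\rightarrow M_*$ is surjective. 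Combining these two facts forces $\Ext_R^1(C,N)=0$, which is (2). The implication (2) $\Rightarrow$ (3) is immediate, since the sequence in (2) already supplies the required epimorphism.

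For (3) $\Rightarrow$ (1), given an epimorphism $\phi:W\twoheadrightarrow M$ with $W\in\Add_R C$, I would exploit the naturality of $\theta$ to obtain the commutative square
$$\xymatrix{C\otimes_S W_* \ar[r]^{1_C\otimes \phi_*} \ar[d]_{\theta_W} & C\otimes_S M_* \ar[d]^{\theta_M} \\ W \ar@{->>}[r]^{\phi} & M.}$$
By Lemma 2.5(1), $\theta_W$ is an isomorphism, and by hypothesis $\phi$ is epic, so the composite $\theta_M\circ(1_C\otimes \phi_*)=\phi\circ\theta_W$ is epic; consequently $\theta_M$ is epic. Corollary 3.4(1) then identifies $M$ as 1-$C$-cotorsionfree.

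There is no genuine obstacle here, as Proposition 3.6 already does the heavy lifting. The only small point worth articulating is the observation that an $\Add_R C$-precover automatically induces a surjection on $\Hom_R(C,-)$ (by plugging $C$ itself into the precover definition); this is what upgrades the precover produced by Proposition 3.6 into the concrete $\Ext^1$-vanishing condition appearing in (2).
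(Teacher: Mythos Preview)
Your proposal is correct and follows the same route as the paper, which simply declares the corollary ``an immediate consequence of Proposition 3.6'' without further detail. You have correctly unpacked that claim: condition (2) is nothing other than the existence of an epic $\Add_R C$-precover (since $\Ext_R^1(C,N)=0$ together with $\Ext_R^1(C,W)=0$ from Lemma 2.5(1) is exactly the statement that $W_*\to M_*$ is onto, hence that $W\to M$ is an $\Add_R C$-precover), and your (3) $\Rightarrow$ (1) argument via the naturality square for $\theta$ is precisely the one appearing in the $n=1$ case of the proof of Proposition 3.6.
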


It follows from Proposition 3.6 that a module $M\in \Mod R$ is
$\infty$-$C$-cotorsionfree if and only if $M$ has an exact proper
$\Add_R C$-resolution $\cdots \rightarrow W_2\rightarrow
W_1\rightarrow W_0\rightarrow M\rightarrow 0$ in $\Mod R$. A module
$M\in \Mod R$ is called \textit{$n$-$C$-cospherical} if
$\Ext_R^i(C,M)=0$ for and $1\leq i\leq n$, and $M$ is called
\emph{$\infty$-$C$-cospherical} if it is $n$-$C$-cospherical for all
$n$. The following result shows that the Bass class with respect to $C$ coincides with the intersection of the class
of $\infty$-$C$-cotorsionfree modules and that of $\infty$-$C$-cospherical modules.

\begin{thm} \label{thm: 3.8}  For a module $M\in\Mod R$, the following statements are equivalent.
\begin{enumerate}
\item $M$ is $\infty$-$C$-cotorsionfree and
$\infty$-$C$-cospherical.
\item $M\in \mathcal {B}_C(R)$.
\end{enumerate}
\end{thm}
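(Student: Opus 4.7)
The plan is to verify that condition (1) is equivalent, clause by clause, to the three defining conditions (B1), (B2), (B3) of the Bass class $\mathcal{B}_C(R)$, so that no genuine new argument is required beyond assembling the definitions together with Corollary 3.4(3).

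First I would observe that $M$ being $\infty$-$C$-cospherical is literally the statement $\Ext_R^i(C,M)=0$ for all $i\geq 1$, i.e.\ condition (B1). Hence this half of (1) matches (B1) on the nose, with nothing to prove.

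Next I would extract the content of $\infty$-$C$-cotorsionfreeness by letting $n\to\infty$ in Corollary 3.4(3). That corollary asserts that for every $n\geq 3$, $M$ is $n$-$C$-cotorsionfree if and only if $\theta_M$ is an isomorphism and $\Tor_i^S(C,M_{*})=0$ for all $1\leq i\leq n-2$. Taking the union over $n\geq 3$, the condition ``$M$ is $n$-$C$-cotorsionfree for every $n$'' therefore translates exactly into the pair of conditions ``$\theta_M$ is an isomorphism'' and ``$\Tor_i^S(C,M_{*})=0$ for all $i\geq 1$''. Since $M_{*}=\Hom_R(C,M)$, the first of these is (B3) and the second is (B2).

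Combining the two previous paragraphs, (1) holds if and only if (B1), (B2) and (B3) all hold, which is precisely (2). I do not anticipate any real obstacle: the theorem is essentially a bookkeeping consequence of Corollary 3.4(3), and the only thing worth noting carefully in the write-up is that when passing from finite $n$ to $\infty$ in Corollary 3.4(3) one must check that the coreflexivity of $M$ (which appears once we reach $n\geq 2$) together with vanishing of all $\Tor_i^S(C,M_{*})$ indeed captures every finite-$n$ requirement, so that no additional condition for $n=1$ or $n=2$ is lost.
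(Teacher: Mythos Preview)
Your argument is correct. Indeed, letting $n\to\infty$ in Corollary~3.4(3) shows that $\infty$-$C$-cotorsionfreeness is exactly the conjunction of (B2) and (B3), while $\infty$-$C$-cosphericity is literally (B1); the caveat you raise about $n=1,2$ is harmless because $n$-$C$-cotorsionfreeness for all $n\geq 3$ already forces $n$-$C$-cotorsionfreeness for $n=1,2$.

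However, your route differs from the paper's. The paper does not unpack the Bass-class axioms via Corollary~3.4(3); instead it invokes Proposition~3.6 (so that $\infty$-$C$-cotorsionfreeness is rephrased as the existence of an infinite proper $\Add_RC$-resolution of $M$) together with \cite[Theorem~6.1]{HW}, which characterizes membership in $\mathcal{B}_C(R)$ in exactly those terms. Your approach has the advantage of being entirely self-contained within the paper, avoiding the appeal to the external Holm--White result, and it makes transparent the one-to-one matching between the three Bass-class conditions and the two halves of statement~(1). The paper's approach, on the other hand, emphasizes the resolution-theoretic viewpoint and ties the result into the existing Foxby-equivalence literature, which is conceptually useful for the applications that follow (e.g.\ the proof of Theorem~3.9 uses the resolution description implicitly).
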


\begin{proof} By Proposition 3.6 and
\cite[Theorem 6.1]{HW}.
\end{proof}

Auslander and Bridger obtained in \cite[Proposition 2.21]{Au} an approximation theorem for
finitely generated modules over left and right noetherian rings.
Takahashi in \cite[Theorem A]{T} got a semiduazlizing version of this result. We dualize \cite[Theorem A]{T}
as follows.

\begin{thm} \label{thm: 3.9} Let $M\in \Mod R$ and $n\geq 1$.
Then the following statements are equivalent.
\begin{enumerate}
\item $\coOmega^n(M)$ is $n$-$C$-cotorsionfree.
\item There exists an exact sequence $0\rightarrow M\rightarrow
X\rightarrow Y\rightarrow 0$ in $\Mod R$ such that $X$ is
$n$-$C$-cospherical and $\Add_RC$-$\id_RY\leq n-1$.
\end{enumerate}
\end{thm}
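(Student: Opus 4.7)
The plan is to prove the equivalence by induction on $n$, using Proposition 3.6 as the key bridge: it converts the $n$-$C$-cotorsionfree property into the existence of a proper $\Add_RC$-resolution of length $n$, which we then couple with the minimal injective resolution of $M$ via pullback constructions. In the base case $n=1$ of (1) $\Rightarrow$ (2), Corollary 3.7 yields a short exact sequence $0 \to K \to W \to \coOmega(M) \to 0$ with $W \in \Add_RC$ and $\Ext^1_R(C, K) = 0$; I form the pullback $X = W \times_{\coOmega(M)} I^0(M)$, obtaining the sequences $0 \to M \to X \to W \to 0$ and $0 \to K \to X \to I^0(M) \to 0$. The latter, together with the injectivity of $I^0(M)$, yields $\Ext^1_R(C, X) = 0$, so $X$ is $1$-$C$-cospherical, and the former is the desired SES with $Y = W \in \Add_RC$ satisfying $\Add_RC$-$\id_R Y = 0 = n-1$.

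For the inductive step of (1) $\Rightarrow$ (2), Proposition 3.6 gives a proper $\Add_RC$-resolution $W_{n-1} \to \cdots \to W_0 \to \coOmega^n(M) \to 0$. Setting $K_j = \ker(W_{j-1} \to W_{j-2})$ with $W_{-1} := \coOmega^n(M)$, the long exact sequence of $\Ext_R^\bullet(C,-)$ applied to each short exact sequence $0 \to K_{j+1} \to W_j \to K_j \to 0$, combined with $\Ext_R^{\geq 1}(C, W_j) = 0$ and the $\Hom_R(C,-)$-exactness of the resolution, yields by induction $\Ext_R^i(C, K_j) = 0$ for $1 \leq i \leq j$; in particular $K_n$ is $n$-$C$-cospherical. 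I would then perform iterated pullbacks along the truncated injective resolution of $M$: beginning with $V_1 = W_0 \times_{\coOmega^n(M)} I^{n-1}(M)$, which produces $0 \to \coOmega^{n-1}(M) \to V_1 \to W_0 \to 0$ and $0 \to K_1 \to V_1 \to I^{n-1}(M) \to 0$, and iterating to progressively absorb $W_1, \ldots, W_{n-1}$ and $I^{n-2}(M), \ldots, I^0(M)$. The resulting module $X$ should contain $M$ and fit into a sequence $0 \to K_n \to X \to E \to 0$ where $E$ is built from the injectives $I^i(M)$, forcing $\Ext_R^i(C, X) = \Ext_R^i(C, K_n) = 0$ for $1 \leq i \leq n$; simultaneously, the quotient $Y = X/M$ inherits an $\Add_RC$-coresolution of length at most $n-1$ assembled out of the $W_j$'s.

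For (2) $\Rightarrow$ (1), I would splice the given SES with the $\Add_RC$-coresolution $0 \to Y \to D^0 \to \cdots \to D^{n-1} \to 0$ of $Y$ to obtain the exact sequence $0 \to M \to X \to D^0 \to \cdots \to D^{n-1} \to 0$, and then construct a chain map $\phi^\bullet$ to the minimal injective resolution of $M$ lifting $\id_M$ by iterated application of the injectivity of each $I^i(M)$. The $n$-$C$-cosphericality of $X$ together with $D^i \in \Add_RC$ ensures, after applying $\Hom_R(C,-)$, that the induced cosyzygy comparison behaves well; in particular, the resulting morphism $D^{n-1} \to \coOmega^n(M)$ together with the corresponding maps from $D^{n-2}, \ldots, D^0$ and $X$ assembles into a proper $\Add_RC$-resolution of $\coOmega^n(M)$ of length $n-1$, and Proposition 3.6 then delivers (1).

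The main technical hurdle will be the inductive step of (1) $\Rightarrow$ (2): the iterated pullback construction must be performed in such a way that it simultaneously preserves the $n$-$C$-cospherical property on $X$ (which follows from the vanishing of $\Ext_R^i(C, K_j)$ established above) and the $\Add_RC$-injective dimension bound on $Y$. The bookkeeping of the short exact sequences produced at each stage is the delicate part of the argument; the converse direction (2) $\Rightarrow$ (1) should be relatively routine once the correct chain-map/comparison is set up.
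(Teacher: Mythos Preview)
Your base case $n=1$ for (1) $\Rightarrow$ (2) is correct and coincides with the paper's argument. Both inductive directions, however, contain genuine gaps.

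\textbf{(1) $\Rightarrow$ (2).} After forming $V_1 = W_0 \times_{\coOmega^n(M)} I^{n-1}(M)$, you cannot simply ``absorb $W_1$'' and iterate: the next pullback needs a surjection onto $\coOmega^{n-1}(M)$, but $\coOmega^{n-1}(M)$ sits in $V_1$ as a \emph{subobject}, and your pre-computed cover $W_1 \twoheadrightarrow K_1$ lands in a \emph{different} submodule. The images of $W_1$ and $I^{n-2}(M)$ in $V_1$ together span only $\ker(V_1 \to \coOmega^n(M))$, not all of $V_1$, so no surjection onto $\coOmega^{n-1}(M)$ emerges. The paper's mechanism is essentially different: it invokes Proposition 3.5 to show that the pullback $V_1$ (their $X_0$) is itself $(n-1)$-$C$-cotorsionfree, and then extracts a \emph{fresh} $\Add_RC$-cover $U_0 \twoheadrightarrow V_1$. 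Restricting this cover over the inclusion $\coOmega^{n-1}(M)\hookrightarrow V_1$ produces a surjection $Y_0 \twoheadrightarrow \coOmega^{n-1}(M)$ whose kernel $Z_0$ has the needed Ext-vanishing, and only then can the pullback with $I^{n-2}(M)$ be formed. The covers $W_j\to K_j$ of your pre-computed resolution cannot be substituted at this step; the interleaving of Proposition 3.5 with each pullback is what makes the induction run.

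\textbf{(2) $\Rightarrow$ (1).} The chain map you describe does \emph{not} assemble into a proper $\Add_R C$-resolution of $\coOmega^n(M)$: the maps $X \to D^0 \to \cdots \to D^{n-1}$ go in the coresolution direction, $X \notin \Add_R C$, and the induced map $D^{n-1}\to\coOmega^n(M)$ need not be surjective (e.g.\ take $Y=0$ with $X=M$ $n$-$C$-cospherical and $\coOmega^n(M)\neq 0$). The paper instead performs iterated \emph{pushouts} along the injective resolution, producing short exact sequences $0\to X_i \to W^i \oplus I^i(M)\to X_{i+1}\to 0$ with $X_0=X$ and $X_n\cong\coOmega^n(M)$; it then verifies $n$-$C$-cotorsionfreeness of $\coOmega^n(M)$ \emph{directly} via Corollary 3.4, not through Proposition 3.6. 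The mapping cone of your chain map would in fact reproduce exactly these pushout sequences, but the conclusion must then be drawn from the explicit $\theta$- and Tor-calculations of Corollary 3.4 rather than by exhibiting a proper $\Add_RC$-resolution.
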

\begin{proof}
(1) $\Rightarrow$ (2). By Proposition 3.6 and Corollary 3.7, the
fact that $\coOmega^n(M)$ is $n$-$C$-cotorsionfree implies that
there exists an exact sequence $0\rightarrow N_0\rightarrow
W_0\rightarrow \coOmega^n(M)\rightarrow 0$ in $\Mod R$ with $W_0\in
\Add_RC$, $N_0$ ($n$-1)-$C$-cotorsionfree and $\Ext_R^1(C,N_0)$
\linebreak $=0$.
We get the following pullback diagram: $$\xymatrix{ & &  0 \ar[d] & 0 \ar[d] & \\
& & \coOmega^{n-1}(M) \ar@{=}[r] \ar[d]&\coOmega^{n-1}(M)   \ar[d]& \\
0 \ar[r]&  N_0 \ar[r] \ar@{=}[d]& X_0 \ar[r] \ar[d]& I^{n-1}(M) \ar[r] \ar[d] & 0 \\
0 \ar[r]& N_0 \ar[r] & W_0 \ar[r] \ar[d]& \coOmega^n(M) \ar[r]  \ar[d]& 0 \\
& & 0 & 0. & }$$ If $n=1$, then the middle column is the desired
sequence.

Let $n\geq$ 2. Since $I^{n-1}(M)\in$ $\mathcal {B}_C(R)$,
$I^{n-1}(M)$ is $\infty$-$C$-cotorsionfree by Theorem 3.8. Note that
$N_0$ is ($n$-1)-$C$-cotorsionfree and $\Ext_R^1(C,N_0)=0$. By
Proposition 3.5, $X_0$ is ($n$-1)-$C$-cotorsionfree. Thus there
exists an exact sequence $0\rightarrow Z_0\rightarrow U_0\rightarrow
X_0\rightarrow 0$ in $\Mod R$ with $U_0\in $ $\Add_RC$, $Z_0$
$(n-2)$-$C$-cotorsionfree and $\Ext_R^1(C,Z_0)=0$ by Proposition 3.6.
We construct the following pullback diagram:
$$\xymatrix{  &  0 \ar[d] & 0 \ar[d] & \\
 & Z_0 \ar@{=}[r] \ar[d]& Z_0   \ar[d]& \\
0 \ar[r]&  Y_0 \ar[r] \ar[d]& U_0 \ar[r] \ar[d]& W_{0} \ar[r] \ar@{=}[d] & 0 \\
0 \ar[r]& \coOmega^{n-1}(M) \ar[r] \ar[d]& X_0 \ar[r] \ar[d]& W_0 \ar[r]  & 0 \\
& 0 & 0 & }$$ such that $\Add_RC$-$\id_R Y_0\leq 1$ and
$\Ext_R^i(C,Z_0)=0$ for $i=1,2$ because $\Ext_R^1(C,X_0)=0$. Using
the leftmost column in this diagram, we also
have the following pullback diagram: $$\xymatrix{ & &  0 \ar[d] & 0 \ar[d] & \\
& & \coOmega^{n-2}(M) \ar@{=}[r] \ar[d]& \coOmega^{n-2}(M) \ar[d]& \\
0 \ar[r]&  Z_0 \ar[r] \ar@{=}[d]& X_1 \ar[r] \ar[d]& I^{n-2}(M) \ar[r] \ar[d] & 0 \\
0 \ar[r]& Z_0 \ar[r] & Y_0 \ar[r] \ar[d]& \coOmega^{n-1}(M) \ar[r]  \ar[d]& 0 \\
& & 0 & 0. & }$$ It follows from the middle row in the above
diagram that $\Ext_R^i(C,X_1)=0$ for $i=1,2$. Therefore,
if $n=2$, then the middle column in the above diagram is the
desired exact sequence.

Let $n \geq 3$. Since $Z_0$ is ($n$-2)-$C$-cotorsionfree and
$\Ext_R^1(C,Z_0)=0$, $X_1$ is ($n$-2)-$C$-cotorsionfree by
Proposition 3.5. We have an exact sequence $0\rightarrow
Z_1\rightarrow U_1\rightarrow X_1\rightarrow 0$ in $\Mod R$ with
$U_1\in \Add_RC$, $Z_1$ $(n-3)$-$C$-cotorsionfree and
$\Ext_R^1(C,Z_1)=0$ by Proposition 3.6 again.
Iterating the above construction of pullback diagrams, we eventually
obtain the desired exact sequence.

(2) $\Rightarrow$ (1). Since $\Add_RC$-$\id_R Y\leq n-1$, there exists an
exact sequence $0\rightarrow
Y\stackrel{d_0}{\rightarrow}W^0\stackrel{d_1}{\rightarrow}W^1\rightarrow
\cdots \stackrel{d_{n-1}}{\rightarrow} W^{n-1}\rightarrow 0$ in $\Mod R$ with all $W^i\in \Add_RC$. Set
$Y_i=\Im d_i$ for each $i$. We have the following pushout diagram:
$$\xymatrix{  &  0 \ar[d] & 0 \ar[d] & \\
0\ar[r] & M\ar[r] \ar[d]& I^0(M) \ar[r] \ar[d]& \coOmega^1(M) \ar[r] \ar@{=}[d]& 0\\
0 \ar[r]&  X \ar[r] \ar[d]& H_0 \ar[r] \ar[d]& \coOmega^1(M) \ar[r] & 0 \\
&Y\ar@{=}[r] \ar[d]&Y \ar[d]\\
& 0 & 0. & }$$ Then $H_0\cong
Y\oplus I^0(M)$. Adding $I^0(M)$ to the exact sequence $0\rightarrow Y\rightarrow W^0\rightarrow Y_1\rightarrow
0$, we get an exact sequence $0\rightarrow Y\oplus I^0(M)\rightarrow W^0\oplus I^0(M)\rightarrow Y_1\rightarrow
0$. Thus the following two pushout diagrams are obtained.
$$\xymatrix{&  &  0 \ar[d] & 0 \ar[d] & \\
0\ar[r] & X\ar[r] \ar@{=}[d]& Y\oplus I^0(M) \ar[r] \ar[d]& \coOmega^1(M)  \ar[r] \ar[d]& 0\\
0\ar[r] & X\ar[r] & W^0\oplus I^0(M) \ar[r] \ar[d]& X_1  \ar[r] \ar[d]& 0\\
& & Y_1 \ar@{=}[r] \ar[d]&Y_1 \ar[d]\\
& & 0 & 0& }$$ and
$$\xymatrix{  &  0 \ar[d] & 0 \ar[d] & \\
0\ar[r] & \coOmega^1(M)\ar[r] \ar[d]& I^1(M) \ar[r] \ar[d]& \coOmega^2(M)  \ar[r] \ar@{=}[d]& 0\\
0 \ar[r]&  X_1 \ar[r] \ar[d]& H_1 \ar[r] \ar[d]& \coOmega^2(M) \ar[r] & 0 \\
&Y_1\ar@{=}[r] \ar[d]&Y_1 \ar[d]\\
& 0 & 0. & }$$ Repeating the procedure in this way yields the following exact sequence:
$$0\rightarrow X_i\rightarrow W^i\oplus I^i(M)\rightarrow X_{i+1}\rightarrow 0$$
for any $0\leq i\leq n-1$, where $X_0=X$. Since $\Ext_R^i(C,X_0)=0$ for any $1\leq i\leq n$ by assumption,
$\Ext_R^j(C,X_i)=0$ for any $1\leq j\leq n-i$. Then there exists an exact sequence:
$$0\rightarrow {X_i}_*\rightarrow (W^i\oplus I^i(M))_*\rightarrow {X_{i+1}}_*\rightarrow
0$$ for any $0\leq i\leq n-1$. By Lemma 2.5, each $\theta_{W^i\oplus I^i(M)}$ is an isomorphism.
Now we have the following commutative diagram with exact rows:
$$\xymatrix{C\otimes_S(W^0\oplus I^0(M))_* \ar[r] \ar[d]^{\theta_{W^0\oplus I^0(M)}}
& C\otimes_S{X_1}_* \ar[d]^{\theta_{X_1}}\ar[r] & 0\\
W^0\oplus I^0(M) \ar[r] &  X_1 \ar[r]& 0.}$$
It follows that $\theta_{X_1}$ is epic and so $X_1$ is 1-$C$-cotorsionfree by Corollary 3.4(1).
Also, there exists the following commutative diagram with exact rows:
$$\xymatrix{ & C\otimes_S{X_1}_{*} \ar[r] \ar[d]^{\theta_{X_1}} & C\otimes_S(W^1\oplus I^1(M))_{*}
\ar[r] \ar[d]^{\theta_{W^1\oplus I^1(M)}}& C\otimes_S{X_2}_{*} \ar[d]^{\theta_{X_2}} \ar[r] & 0\\
0 \ar[r] &  X_1 \ar[r] & W^1\oplus I^1(M) \ar[r] &  X_2 \ar[r]& 0.}$$
So $\theta_{X_2}$ is an isomorphism and hence $X_2$ is
2-$C$-cotorsionfree by Corollary 3.4(2). Furthermore, there exists the following
commutative diagram with exact rows:
{\footnotesize $$\xymatrix{ 0 \ar[r] & \Tor^S_1(C,{X_3}_{*}) \ar[r]  & C\otimes_S{X_2}_{*}
\ar[r] \ar[d]^{\theta_{X_2}} & C\otimes_S(W^2\oplus I^2(M))_{*} \ar[r]
\ar[d]^{\theta_{W^2\oplus I^2(M)}}& C\otimes_S{X_3}_{*} \ar[d]^{\theta_{X_3}} \ar[r] & 0\\
& 0 \ar[r] &  X_2 \ar[r] & W^2\oplus I^2(M) \ar[r] &  X_3 \ar[r]& 0.}$$}
So $\theta_{X_3}$ is an isomorphism and $\Tor^S_1(C,{X_3}_{*})=0$, and hence
$X_3$ is 3-$C$-cotorsionfree by Corollary 3.4(3).
Repeating a similar argument, we eventually get that $\coOmega^{n}(M)\cong X_n$ is
$n$-$C$-cotorsionfree.
\end{proof}

The following result is an addendum to Theorem 3.9.

\begin{prop} \label{Prop: 3.10} Let $M\in \Mod R$ and $n\geq 1$. If
$\coOmega^{n}(M)$ is $\infty$-$C$-cotorsionfree, then there exists an
exact sequence $0\rightarrow M\rightarrow X\rightarrow Y\rightarrow
0$ in $\Mod R$ with $X$ $\infty$-cotorsionfree and $\Add_RC$-$\id_R Y\leq n-1$.
\end{prop}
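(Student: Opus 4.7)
The plan is to reuse the iterated pullback construction from the proof of Theorem~3.9~(1)$\Rightarrow$(2), but at every stage replace ``$k$-$C$-cotorsionfree'' by ``$\infty$-$C$-cotorsionfree'' for the intermediate modules. This upgrade is available because our hypothesis, together with the characterization stated right after Proposition~3.6, guarantees that every module built as a first syzygy in a proper $\Add_R C$-resolution of an $\infty$-$C$-cotorsionfree module is itself $\infty$-$C$-cotorsionfree.

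Concretely, I would first use the infinite version of Proposition~3.6 to obtain an exact proper $\Add_R C$-resolution $\cdots\to W_1\to W_0\to \coOmega^n(M)\to 0$ and put $N_0=\Ker(W_0\to \coOmega^n(M))$. Shifting this resolution shows that $N_0$ is $\infty$-$C$-cotorsionfree, and the $\Hom_R(C,-)$-exactness of the resolution gives $\Ext_R^1(C,N_0)=0$. The same pullback as in Theorem~3.9 then produces $X_0$ fitting into
$$0\to N_0\to X_0\to I^{n-1}(M)\to 0\qquad\text{and}\qquad 0\to \coOmega^{n-1}(M)\to X_0\to W_0\to 0.$$
The first sequence is $\Hom_R(C,-)$-exact and both $N_0$ and $I^{n-1}(M)$ are $\infty$-$C$-cotorsionfree (the latter because $I^{n-1}(M)\in\mathcal{B}_C(R)$ by Theorem~3.8 and Lemma~2.5), so Proposition~3.5 forces $X_0$ to be $\infty$-$C$-cotorsionfree. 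When $n=1$ the second sequence already has the required form with $Y=W_0\in\Add_R C$.

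For $n\geq 2$ I would iterate. Assume $X_{i-1}$ has been shown $\infty$-$C$-cotorsionfree; then Proposition~3.6 again produces an exact sequence $0\to Z_{i-1}\to U_{i-1}\to X_{i-1}\to 0$ with $U_{i-1}\in\Add_R C$, $Z_{i-1}$ $\infty$-$C$-cotorsionfree, and $\Ext_R^1(C,Z_{i-1})=0$. The two successive pullback diagrams used in the proof of Theorem~3.9 then yield $Y_{i-1}$ with $\Add_R C$-$\id_R Y_{i-1}\leq i$ and $X_i$ fitting into
$$0\to Z_{i-1}\to X_i\to I^{n-i-1}(M)\to 0\qquad\text{and}\qquad 0\to \coOmega^{n-i-1}(M)\to X_i\to Y_{i-1}\to 0,$$
and applying Proposition~3.5 to the first row keeps $X_i$ $\infty$-$C$-cotorsionfree. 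After $n-1$ iterations, setting $X=X_{n-1}$ and $Y=Y_{n-2}$ gives the desired exact sequence $0\to M\to X\to Y\to 0$ with $\Add_R C$-$\id_R Y\leq n-1$.

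The main technical obstacle is bookkeeping: at each stage one must confirm that the horizontal sequence through which Proposition~3.5 propagates $\infty$-$C$-cotorsionfreeness is $\Hom_R(C,-)$-exact. In every case this reduces to the vanishing of $\Ext_R^1(C,-)$ on a first syzygy appearing in a proper $\Add_R C$-resolution, which is automatic. No new difficulty arises beyond what is already handled in Theorem~3.9.
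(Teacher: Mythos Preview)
Your proposal is correct and uses the same pullback machinery as the paper's proof (Proposition~3.6 to peel off an $\Add_R C$-syzygy, followed by pullbacks against the minimal injective resolution of $M$, with Proposition~3.5 propagating $\infty$-$C$-cotorsionfreeness through $\Hom_R(C,-)$-exact sequences). The only difference is organizational: the paper packages the iteration as a formal induction on $n$, applying the induction hypothesis to $\coOmega^{1}(M)$ and then performing just two pullbacks, whereas you unroll the entire construction of Theorem~3.9 explicitly; the resulting diagrams and the final sequence are the same.
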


\begin{proof} We proceed by induction on $n$.

Let $n=1$. Since $\coOmega^{1}(M)$ is $\infty$-$C$-cotorsionfree by assumption,
there exists an exact sequence $0\rightarrow N_1\rightarrow
W_1\rightarrow \coOmega^{1}(M)\rightarrow 0$ in $\Mod R$ with $W_1\in\Add_RC$, $N_1$
$\infty$-$C$-cotorsionfree and $\Ext_R^1(C,N_1)=0$
by Proposition 3.6. Consider the following
pullback diagram:
$$\xymatrix{& & 0 \ar[d]& 0 \ar[d] &  \\
& & N_1 \ar@{=}[r] \ar[d] & N_1  \ar[d]& \\
0 \ar[r]&  M \ar[r] \ar@{=}[d]& X \ar[r] \ar[d]& W_1 \ar[r] \ar[d] & 0 \\
0 \ar[r]& M \ar[r] & I^0(M) \ar[r] \ar[d]& \coOmega^{1}(M) \ar[r] \ar[d] & 0 \\
& & 0 & 0. }$$ It follows from Proposition 3.5 that the middle row in
the above diagram is the desired sequence.

Now suppose $n\geq 2$. By the induction hypothesis, there exists an
exact sequence $0\rightarrow \coOmega^{1}(M)\rightarrow
X'\rightarrow Y'\rightarrow 0$ in $\Mod R$ with $X'$ $\infty$-$C$-cotorsionfree and
$\Add_RC$-$\id_RY'\leq n-2$. We
also have an exact sequence $0\rightarrow X''\rightarrow W'\rightarrow
X'\rightarrow 0$ in $\Mod R$ with $W'\in\Add_RC$, $X''$ $\infty$-$C$-cotorsionfree
and $\Ext_R^1(C,X'')=0$  by Proposition 3.6. We have the following pullback diagram:
$$\xymatrix{ & 0 \ar[d]& 0 \ar[d] & \\
& X'' \ar[d] \ar@{=}[r]& X'' \ar[d] & \\
0 \ar[r] & Y \ar[r] \ar[d]& W' \ar[r] \ar[d] & Y' \ar[r] \ar@{=}[d]& 0 \\
0 \ar[r] &  \coOmega^1(M) \ar[r] \ar[d]& X' \ar[r] \ar[d] & Y' \ar[r] &0\\
& 0 & 0. &}$$
Then $\Add_RC$-$\id_R Y\leq n-1$. Consider the following pullback diagram:
$$\xymatrix{& & 0 \ar[d]& 0 \ar[d] & \\
& & X'' \ar[d] \ar@{=}[r]& X'' \ar[d] & \\
0 \ar[r]&  M \ar[r] \ar@{=}[d]& X \ar[r] \ar[d]& Y \ar[r] \ar[d] & 0 \\
0 \ar[r]&  M \ar[r] & I^0(M) \ar[r] \ar[d]& \coOmega^1(M) \ar[r] \ar[d] & 0\\
& & 0 & 0. &}$$ Note that the middle column in this diagram is $\Hom_R(C,-)$-exact.
So $X$ is $\infty$-$C$-cotorsionfree
by Proposition 3.5. Therefore the middle row in this diagram is as desired.
\end{proof}

\section {\bf Cograde and Cotorsionfreeness }

\bigskip

In this section, for a module $M\in \Mod R$ and a positive integer $n$, we will give a criterion
in terms of the properties of the cograde of modules for judging when $\coOmega^{i}(M)$
is $i$-$C$-cotorsionfree for any $1\leq i\leq n$.

Let $M \in \Mod R$ and $n\geq 1$. From the exact sequence:
$$0\rightarrow \coOmega^{n-1}(M)\stackrel{\lambda^{n-1}}{\rightarrow}
I^{n-1}(M)\stackrel{p^n}{\rightarrow}\coOmega^{n}(M)\rightarrow 0 \eqno{(4.1)}$$ we get the following exact sequence:
$$0\rightarrow (\coOmega^{n-1}(M))_*\stackrel{{\lambda^{n-1}}_*}{\rightarrow}
I^{n-1}(M)_*\stackrel{{p^{n}}_*}\rightarrow
(\coOmega^{n}(M))_*\rightarrow \Ext_S^n(C,M)\rightarrow 0.$$
Set $\Im {p^n}_*=N$, and decompose this
sequence into two short exact sequences:
$$0\rightarrow (\coOmega^{n-1}(M))_*\stackrel{{\lambda^{n-1}}_*}{\rightarrow}
I^{n-1}(M)_*\stackrel{\beta}{\rightarrow} N\rightarrow 0$$ and
$$0\rightarrow N\stackrel{\alpha}{\rightarrow}(\coOmega^{n}(M))_*\rightarrow
\Ext_R^n(C,M)\rightarrow 0.\eqno{(4.2)}$$
Then we get the following commutative diagram with exact rows:
$$\xymatrix{ & C\otimes_S(\coOmega^{n-1}(M))_*
\ar[r]^{1_C\otimes {\lambda^{n-1}}_*} \ar[d]^{\theta_{\coOmega^{n-1}(M)}} & C\otimes_SI^{n-1}(M)_*
\ar[r]^{1_C\otimes \beta} \ar[d]^{\theta_{I^{n-1}(M)}}& C\otimes_SN \ar@{-->}[d]^g \ar[r] & 0\\
0 \ar[r] &  \coOmega^{n-1}(M) \ar[r]^{\lambda^{n-1}} & I^{n-1}(M) \ar[r]^{p^{n}} &  \coOmega^{n}(M) \ar[r]& 0.\\
&  & {\rm Diagram}\ (4.1) &  }$$
Then it is straightforward to check that there exists the following commutative diagram with exact rows:
$$\xymatrix{
C\otimes_SN
\ar[d]^g \ar[r]^{1_C\otimes \alpha}& C\otimes_S(\coOmega^{n}(M))_* \ar[r]
\ar[d]^{\theta_{\coOmega^{n}(M)}}& C\otimes_S\Ext^n_R(C,M) \ar[r]& 0\\
\coOmega^{n}(M) \ar@{=}[r]& \coOmega^{n}(M).\\
& {\rm Diagram}\ (4.2)}$$

\begin{lem} \label{lem: 4.1} For a module $M\in \Mod R$, we have
\begin{enumerate}
\item $\coOmega^{1}(M)$ is 1-$C$-cotorsionfree.
\item For any $n\geq 2$, $\Ker \theta_{\coOmega^n(M)} \cong C\otimes_S\Ext_R^n(C,M).$
\end{enumerate}
\end{lem}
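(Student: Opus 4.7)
Plan: Both parts follow from the two commutative diagrams displayed just before the lemma together with Lemma~2.5(2) and the snake lemma, and I would address them in turn.

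For part~(1), specialise Diagram~(4.1) to $n=1$. The middle vertical arrow $\theta_{I^{0}(M)}$ is an isomorphism by Lemma~2.5(2), $p^{1}$ is surjective, and $1_{C}\otimes \beta$ is surjective by right-exactness of $C\otimes_{S}-$. Commutativity of the right square therefore forces $g$ to be surjective. Transplanting $g$ into Diagram~(4.2) for $n=1$, the identity $\theta_{\coOmega^{1}(M)}\circ (1_{C}\otimes \alpha)=g$ shows that $\theta_{\coOmega^{1}(M)}$ is surjective, and Corollary~3.4(1) concludes part~(1).

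For part~(2), fix $n\geq 2$. The same chase on Diagram~(4.1) still shows that $g$ is surjective. The key step is to upgrade this to the statement that $g$ is an isomorphism: applying the snake lemma to Diagram~(4.1) and using Lemma~2.5(2) to kill both the kernel and the cokernel of $\theta_{I^{n-1}(M)}$, the six-term sequence collapses to $\Ker g\cong \Coker \theta_{\coOmega^{n-1}(M)}$. Now $\coOmega^{n-1}(M)=\coOmega^{1}(\coOmega^{n-2}(M))$ is itself a first cosyzygy, so by part~(1) (applied to $\coOmega^{n-2}(M)$) it is $1$-$C$-cotorsionfree; equivalently $\theta_{\coOmega^{n-1}(M)}$ is surjective by Corollary~3.4(1), forcing $\Coker \theta_{\coOmega^{n-1}(M)}=0$ and hence $\Ker g=0$.

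With $g$ now known to be an isomorphism, I would tensor the short exact sequence~(4.2) with $C$ to obtain the right-exact top row $C\otimes_{S}N \to C\otimes_{S}(\coOmega^{n}(M))_{*}\to C\otimes_{S}\Ext^{n}_{R}(C,M)\to 0$, which fits into a commutative ladder with the trivial bottom row $0\to \coOmega^{n}(M) = \coOmega^{n}(M) \to 0$ under the vertical maps $g$, $\theta_{\coOmega^{n}(M)}$, and $0$. A second application of the snake lemma, combined with $\Ker g = 0 = \Coker g$, collapses the resulting six-term sequence to the desired isomorphism $\Ker \theta_{\coOmega^{n}(M)}\cong C\otimes_{S}\Ext^{n}_{R}(C,M)$ (and incidentally $\Coker \theta_{\coOmega^{n}(M)}=0$). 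The main subtlety is the recursive use of part~(1) to annihilate $\Coker \theta_{\coOmega^{n-1}(M)}$; once this observation is in place, the rest is routine diagram chasing.
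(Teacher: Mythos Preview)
Your proof is correct and, for part~(2), follows the paper's argument essentially step for step, just with the snake-lemma bookkeeping made explicit where the paper is terse. The only minor divergence is in part~(1): the paper bypasses the diagram chase entirely, observing instead that $I^{0}(M)$ is $\infty$-$C$-cotorsionfree by Theorem~3.8, so Corollary~3.7 supplies an epimorphism $W\twoheadrightarrow I^{0}(M)$ with $W\in\Add_{R}C$, and composing with $I^{0}(M)\twoheadrightarrow\coOmega^{1}(M)$ yields the needed $\Add_{R}C$-epimorphism; your direct chase through Diagrams~(4.1)--(4.2) is equally valid and arguably more in the spirit of the setup preceding the lemma.
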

\begin{proof}(1). Since $I^0(M)$ is $\infty$-$C$-cotorsionfree by Theorem 3.8,
the assertion follows from Corollary 3.7.

(2). If $n\geq 2$, then $\theta_{\coOmega^{n-1}(M)}$ is an epimorphism by (1).
Because $\theta_{I^{n-1}(M)}$ is an isomorphism by Theorem 3.8, $g$ is an isomorphism in the above two diagrams.
So $\Ker\theta_{\coOmega^{n}(M)} \cong
C\otimes_S\Ext_R^n(C,M)$.
\end{proof}

The notion of the cograde of finitely generated modules was introduced in \cite[Corollary 3.11]{O}. The following definition
generalizes it to a general setting.

\begin{df} \label{df: 4.2}
{\rm  For a module $N\in \Mod S$, the \emph{cograde} of $N$ with
respect to $C$ is defined by $\cograde_CN:=\inf\{i\mid\Tor^S_i(C,N)
\neq 0\}$.}
\end{df}

We are now in a position to give the main result in this section,
which can be regarded as a dual version of \cite[Proposition 2.26]{Au}.

\begin{thm} \label{thm: 4.3} Let $M\in \Mod R$ and $n\geq 1$.
Then $\coOmega^{i}(M)$ is $i$-$C$-cotorsionfree for any $1\leq i\leq n$ if
and only if $\cograde_C\Ext_R^i(C,M)\geq i-1$ for any $1\leq i\leq n$.
\end{thm}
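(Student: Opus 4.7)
The plan is to induct on $n$. The base case $n=1$ is immediate: Lemma 4.1(1) says $\coOmega^1(M)$ is always $1$-$C$-cotorsionfree, and $\cograde_C\Ext_R^1(C,M)\geq 0$ holds vacuously. For the inductive step, since both sides of the equivalence add exactly one new clause when $n-1$ is replaced by $n$, it suffices to show that, under the conclusion of the theorem at stage $n-1$, the statements ``$\coOmega^n(M)$ is $n$-$C$-cotorsionfree'' and ``$\cograde_C\Ext_R^n(C,M)\geq n-1$'' are equivalent.

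For this, I would use Corollary 3.4 to decompose the condition ``$\coOmega^n(M)$ is $n$-$C$-cotorsionfree'' into: (a) $\theta_{\coOmega^n(M)}$ is an isomorphism, and (for $n\geq 3$) (b) $\Tor_i^S(C,(\coOmega^n(M))_*)=0$ for $1\leq i\leq n-2$. By Lemma 4.1(2), $\Ker\theta_{\coOmega^n(M)}\cong C\otimes_S\Ext_R^n(C,M)$ for $n\geq 2$, and $\theta_{\coOmega^n(M)}$ is automatically epic by the commutative diagram immediately after Lemma~4.1 (in which $g$ is an isomorphism since $\theta_{\coOmega^{n-1}(M)}$ is epic). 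Hence (a) is equivalent to $C\otimes_S\Ext_R^n(C,M)=0$, i.e.\ to the $i=0$ clause of $\cograde_C\Ext_R^n(C,M)\geq n-1$; this already finishes the case $n=2$.

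For $n\geq 3$, the main work is to translate (b) into vanishing of $\Tor_i^S(C,\Ext_R^n(C,M))$. I would split the four-term exact sequence
\[
0\to(\coOmega^{n-1}(M))_*\to I^{n-1}(M)_*\to(\coOmega^n(M))_*\to\Ext_R^n(C,M)\to 0
\]
at $N=\Im((\coOmega^n(M))_*\to\ldots)$'s preimage, obtaining
\[
0\to(\coOmega^{n-1}(M))_*\to I^{n-1}(M)_*\to N\to 0
\qquad\text{and}\qquad
0\to N\to(\coOmega^n(M))_*\to\Ext_R^n(C,M)\to 0.
\]
The inductive hypothesis plus Corollary 3.4(3) give $\theta_{\coOmega^{n-1}(M)}$ iso and $\Tor_i^S(C,(\coOmega^{n-1}(M))_*)=0$ for $1\leq i\leq n-3$. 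Since $I^{n-1}(M)\in\mathcal{B}_C(R)$ (Theorem 3.8, cf.\ Lemma 2.5), all $\Tor^S_{\geq 1}(C,I^{n-1}(M)_*)$ vanish, so the long exact $\Tor$-sequence of the first SES, combined with the identification $\Tor_1^S(C,N)\cong\Ker\theta_{\coOmega^{n-1}(M)}=0$ read off from the pre-Lemma 4.1 diagram, yields $\Tor_j^S(C,N)=0$ for $1\leq j\leq n-2$.

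Feeding this into the long exact $\Tor$-sequence of the second SES (and noting that $C\otimes_S N\hookrightarrow C\otimes_S(\coOmega^n(M))_*$ is injective, by the decomposition discussion preceding Lemma 4.1) produces isomorphisms $\Tor_i^S(C,(\coOmega^n(M))_*)\cong\Tor_i^S(C,\Ext_R^n(C,M))$ for $1\leq i\leq n-2$. Combining this with (a), the condition that $\coOmega^n(M)$ be $n$-$C$-cotorsionfree becomes exactly $\Tor_i^S(C,\Ext_R^n(C,M))=0$ for $0\leq i\leq n-2$, i.e.\ $\cograde_C\Ext_R^n(C,M)\geq n-1$, closing the induction. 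The only real obstacle is the $\Tor$-index bookkeeping across the two short exact sequences; the geometric content reduces to the single comparison $\Tor_1^S(C,N)\cong\Ker\theta_{\coOmega^{n-1}(M)}$, which is forced by the commutativity already displayed before Lemma 4.1.
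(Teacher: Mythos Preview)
Your proposal is correct and follows essentially the same route as the paper: induction on $n$, with the inductive step powered by the two short exact sequences obtained by splitting the four-term sequence (4.3) at $N=\Im{p^n}_*$, together with Lemma~4.1 and Corollary~3.4. The one organizational difference is that you establish the isomorphisms $\Tor_i^S(C,(\coOmega^n(M))_*)\cong\Tor_i^S(C,\Ext_R^n(C,M))$ for $1\le i\le n-2$ once under the inductive hypothesis (using $\Tor_j^S(C,N)=0$ for $1\le j\le n-2$ and the injectivity of $1_C\otimes\alpha$), so both implications fall out simultaneously; the paper instead treats the two directions separately, first showing $\cograde\ge 2$ and then pushing to $\cograde\ge n-1$ by dimension shifting along (4.3). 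Your packaging is a little tidier but the underlying computations are identical.
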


\begin{proof} We proceed by induction on $n$.
If $n=1$, then the assertion follows from Lemma 4.1(1).

Let $n=2$. Then $\coOmega^{2}(M)$ is $2$-$C$-cotorsionfree if and only if
$\theta_{\coOmega^{2}(M)}$ is an isomorphism by Corollary 3.4(2). Note that
$\theta_{\coOmega^{2}(M)}$ is epic by Lemma 4.1(1). So
$\coOmega^{2}(M)$ is $2$-$C$-cotorsionfree if and only if
$\theta_{\coOmega^{2}(M)}$ is monic. But $\Ker\theta_{\coOmega^{2}(M)} \cong
C\otimes_S\Ext_R^2(C,M)$ by Lemma 4.1(2). So
$\coOmega^{2}(M)$ is $2$-$C$-cotorsionfree if and only if
$C\otimes_S\Ext_S^2(C,M)=0$, that is, $\cograde _C\Ext_S^2(C,M)\geq 1$.

Now suppose $n\geq 3$.

If $\coOmega^{i}(M)$ is $i$-$C$-cotorsionfree for
$1\leq i\leq n$, then by the induction hypothesis, it suffices to show that
$\cograde_C\Ext_S^n(C,M)\geq n-1$. By Lemma 4.1(2), $C\otimes_S\Ext_R^n(C,M)\cong \Ker \theta_{\coOmega^n(M)}=0$.
From the exact sequence (4.1), we get the following exact sequence:
$$\Tor_1^S(C,(\coOmega^{n}(M))_*)\rightarrow \Tor_1^S(C,\Ext^n_R(C,M))$$
$$\rightarrow C\otimes_SN
\stackrel{1_C\otimes \alpha}{\rightarrow} C\otimes_S(\coOmega^{n}(M))_* \rightarrow C\otimes_S\Ext^n_R(C,M) \rightarrow 0.$$
Because both $\theta_{\coOmega^{n-1}(M)}$ and $\theta_{I^{n-1}(M)}$ are isomorphisms, the homomorphism
$g$ in the diagram behind (4.2) is also an isomorphism. Then from Diagram (4.2) we know that $1_C\otimes \alpha$
is monic. By Corollary 3.4(3) we have
$\Tor_i^S(C,(\coOmega^{n}(M))_*)=0$ for any $1\leq i\leq n-2$. So $\Tor_1^S(C,\Ext^n_R(C,M))=0$, and hence
$\cograde_C\Ext_R^n(C,M)\geq 2$.

From the exact sequence (4.1) get the following exact sequence:
$$0\rightarrow (\coOmega^{n-1}(M))_*\stackrel{{\lambda^{n-1}}_*}{\rightarrow}
I^{n-1}(M)_*\stackrel{{p^{n}}_*}\rightarrow
(\coOmega^{n}(M))_*\rightarrow \Ext_S^n(C,M)\rightarrow 0.\eqno{(4.3)}$$
By Theorem 3.8 and Corollary 3.4(3), $\Tor_i^S(C,I^{n-1}(M)_*)=0$ for any $i\geq 1$.
Again by Corollary 3.4(3) we have
$\Tor_i^S(C,(\coOmega^{n-1}(M))_*)=0$ for any $1\leq i\leq n-3$. So by the dimension shifting,
$\Tor_i^S(C,\Ext^n_R(C,M))=0$ for any $3\leq i\leq n-2$, and hence
$\cograde_C\Ext_R^n(C,M)\geq n-1$.

Conversely, if $\cograde_C\Ext_R^i(C,M)\geq i-1$ for any $1\leq i\leq n$,
then by the induction hypothesis, it suffices to show that
$\coOmega^{n}(M)$ is $n$-$C$-cotorsionfree. Since
$\coOmega^{n-1}(M)$ is $(n-1)$-$C$-cotorsionfree by the induction hypothesis, $\theta_{\coOmega^{n-1}(M)}$
is an isomorphism. Notice that $\theta_{I^{n-1}(M)}$ is also an isomorphism, so is the homomorphism
$g$ in Diagram (4.1). Because $\cograde_C\Ext_R^n(C,M)\geq n-1$ by assumption,
$1_C\otimes \alpha$ in Diagram (4.2) is an isomorphism. It implies that
$\theta_{\coOmega^{n}(M)}$ is also an isomorphism and $\coOmega^{n}(M)$ is $C$-coreflexive.
On the other hand, similar to the above argument, using the dimension shifting, from the exact sequence (4.3) we get that
$\Tor_i^S(C,(\coOmega^{n}(M))_*)=0$ for any $1\leq i\leq n-2$. Then we conclude that $\coOmega^{n}(M)$ is $n$-$C$-cotorsionfree
by Corollary 3.4(3).
\end{proof}

\section {\bf Special cotorsionfree modules over artin algebras}

\bigskip

Throughout this section, $\Lambda$ is an artin $R$-algebra over a
commutative artin ring $R$. Let $\mod \Lambda$ be the class of
finitely generated left $\Lambda$-modules. We denote by $D$ the ordinary Matlis duality
between $\mod\Lambda^{op}$ and $\mod \Lambda$, that is, $D(-):=\Hom_R(-,I^0(R/J(R)))$, where $J(R)$ is the
Jacobson radical of $R$ and $I^0(R/J(R))$ is the injective envelope
of $R/J(R)$. It is easy to verify that ($\Lambda, \Lambda$)-bimodule
$D(\Lambda)$ is semidualizing. We use $\add D(\Lambda)$ to denote
the subclass of $\mod \Lambda$ consisting of modules isomorphic to
direct summands of finite direct sums of copies of $D(\Lambda)$. We
use abbreviation $\cTr(-)$ for $\cTr_{D(\Lambda)}(-)$. Let $A\in
\mod\Lambda$ and $n\geq 1$. Then $A$ is called
\emph{$n$-cotorsionfree} if $\Tor_i^{\Lambda}(D(\Lambda),\cTr A)=0$
for any $1\leq i\leq n$, and $A$ is called
\emph{$\infty$-cotorsionfree} if it is $n$-cotorsionfree for all
$n$; in particular, every module in $\mod \Lambda$ is
0-cotorsionfree. In addition, $A$ is called \emph{$n$-cospherical}
if $\Ext_{\Lambda}^i(D(\Lambda),A)=0$ for any $1\leq i\leq n$, and
$A$ is called \emph{$\infty$-cospherical} if it is $n$-cospherical
for all $n$.

Put $(-)^*=:\Hom_{\Lambda}(-,\Lambda)$. The following result establishes the dual relation between
the cotranspose (resp. $n$-cotorsionfree modules) and the transpose (resp. $n$-torsionfree modules).

\begin{prop} \label{prop: 5.1} Let $A\in\mod \Lambda$ and $n\geq 1$. Then we have
\begin{enumerate}
\item $\Tr A\cong\cTr D(A)$.
\item $\cTr A \cong\Tr D(A)$.
\item $A$ is $n$-torsionfree if and only if $D(A)$ is $n$-cotorsionfree.
\item $A$ is $n$-cotorsionfree if and only if $D(A)$ is $n$-torsionfree.
\end{enumerate}
\end{prop}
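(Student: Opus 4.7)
The plan is to exploit the contravariant exact equivalence $D\colon\mod\Lambda\leftrightarrow\mod\Lambda^{op}$ of Matlis duality. Two features of $D$ drive the whole argument: it interchanges finitely generated projectives with finitely generated injectives and sends projective covers to injective envelopes, so $D$ converts a minimal projective resolution of a finitely generated module into the minimal injective resolution of its $D$-dual (and vice versa); and being a duality it supplies a natural isomorphism $\Hom_{\Lambda^{op}}(D(Y),D(X))\cong\Hom_{\Lambda}(X,Y)$, which specializes, for any $P\in\mod\Lambda$, to $\Hom_{\Lambda^{op}}(D(\Lambda),D(P))\cong P^{*}$ and, dually, for any $I\in\mod\Lambda$ to $\Hom_{\Lambda^{op}}(D(I),\Lambda)\cong\Hom_{\Lambda}(D(\Lambda),I)=I_{*}$.

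For (1), I would take a minimal projective presentation $P_{1}\stackrel{f}{\to}P_{0}\to A\to 0$ of $A$. Applying $D$ yields $0\to D(A)\to D(P_{0})\stackrel{D(f)}{\longrightarrow}D(P_{1})$, which, by the first feature above, is the initial segment of the minimal injective resolution of $D(A)\in\mod\Lambda^{op}$. Applying $\Hom_{\Lambda^{op}}(D(\Lambda),-)$ to its last two terms is then exactly what computes $\cTr D(A)$, and the Matlis-duality isomorphism transports this termwise to $P_{0}^{*}\stackrel{f^{*}}{\to}P_{1}^{*}$, whose cokernel is $\Tr A$. Claim (2) is proved symmetrically: the minimal injective resolution $0\to A\to I^{0}\to I^{1}\to\cdots$ of $A$ becomes under $D$ a minimal projective resolution of $D(A)$ ending $D(I^{1})\to D(I^{0})\to D(A)\to 0$, and applying $\Hom_{\Lambda^{op}}(-,\Lambda)$ together with $\Hom_{\Lambda^{op}}(D(I^{i}),\Lambda)\cong(I^{i})_{*}$ identifies $\Tr D(A)$ with $\cTr A$.

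For (3) and (4) the key technical input is the Nakayama-type formula
\[
\Tor^{\Lambda}_{i}(X,D(\Lambda))\;\cong\;D\bigl(\Ext^{i}_{\Lambda}(X,\Lambda)\bigr)
\]
for any finitely generated right $\Lambda$-module $X$. I would obtain it by choosing a finitely generated projective resolution $P_{\bullet}\to X$, using the termwise natural isomorphism $P\otimes_{\Lambda}D(\Lambda)\cong D(\Hom_{\Lambda}(P,\Lambda))$ (immediate on $P=\Lambda$ and additive in $P$), and applying the exact functor $D$. Specializing $X=\Tr A$ and using (1) rewrites the $n$-cotorsionfree condition for $D(A)$ as the vanishing of $D(\Ext^{i}_{\Lambda}(\Tr A,\Lambda))$ for $1\leq i\leq n$; since $D$ is faithfully exact, this is equivalent to the vanishing of $\Ext^{i}_{\Lambda}(\Tr A,\Lambda)$, i.e., to $A$ being $n$-torsionfree, proving (3). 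Claim (4) then follows either by applying (3) with $A$ replaced by $D(A)\in\mod\Lambda^{op}$ and invoking $D\circ D\cong\mathrm{id}$, or by the mirror computation that uses (2) in place of (1).

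The main obstacle is really bookkeeping rather than any one deep step: $D$ swaps sides, $\Tr$ and $\cTr$ live on the side opposite to their input, and the cotranspose is defined via the \emph{minimal} injective resolution, so I must verify carefully that the injective copresentation of $D(A)$ extracted from a minimal projective presentation of $A$ really coincides with the initial segment of the minimal injective resolution of $D(A)$ (and the mirror statement for (2)). Once that is in hand, the compatibility of the Matlis adjunction and of the Nakayama formula with the differentials is only a routine diagram chase.
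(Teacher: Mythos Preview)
Your proposal is correct and follows essentially the same route as the paper: for (1) you dualize a minimal projective presentation exactly as the paper does, and for (3) you invoke the Ext--Tor duality under $D$, which the paper phrases via the Cartan--Eilenberg isomorphism $\Ext^i_\Lambda(\Tr A,\Lambda)\cong\Hom_\Lambda(\Tor_i^\Lambda(\Tr A,D(\Lambda)),D(\Lambda))$---this is your Nakayama formula after identifying $\Hom_\Lambda(-,D(\Lambda))$ with $D$. The paper disposes of (2) and (4) as formal duals of (1) and (3), which matches your symmetric treatment.
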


\begin{proof} Because (2) and (4) are duals of (1) and (3) respectively, it suffices to prove (1) and (3).

(1) Let
$$P_1\rightarrow P_0\rightarrow A\rightarrow 0$$ be a minimal projective presentation of $A$ in $\mod \Lambda$.
Then we have the following exact sequence:
$$0\rightarrow A^*\rightarrow P_0^{*}\rightarrow P_1^{*}\rightarrow
\Tr A\rightarrow 0,$$ and a minimal injective presentation:
$$0\rightarrow D(A)\rightarrow D(P_0)\rightarrow
D(P_1)$$ of $D(A)$.  Now we obtain another exact sequence:
{\footnotesize$$0\rightarrow \Hom_{\Lambda}(D(\Lambda),D(A))\rightarrow
\Hom_{\Lambda}(D(\Lambda),D(P_0))\rightarrow
\Hom_{\Lambda}(D(\Lambda),D(P_1))\rightarrow\cTr D(A)\rightarrow
0.$$} Since $P_i^*\cong\Hom_{\Lambda}(D(\Lambda),D(P_i))$ for
$i=1,2$, $\Tr A\cong\cTr D(A)$.

(3) For any $i\geq 1$, we have \begin{align*}
&\ \ \ \ \Ext_{\Lambda}^i(\Tr A,\Lambda)\\
& \cong\Ext_{\Lambda}^i(\Tr A,\Hom_{\Lambda}(D(\Lambda),D(\Lambda)))\\
& \cong\Hom_{\Lambda}(\Tor_i^{\Lambda}(\Tr A,D(\Lambda)),D(\Lambda)) \ \text{(by \cite[Chapter VI, Proposition 5.1]{CE})}.
\end{align*}
Note that $D(\Lambda)$ is an injective cogenerator for $\Mod \Lambda$.
So, for any $i\geq 1$ we have that $\Ext_{\Lambda}^i(\Tr A,\Lambda)=0$ if and only if $\Tor_i^{\Lambda}(\Tr A,D(\Lambda))=0$,
and if and only if $\Tor_i^{\Lambda}(\cTr D(A),D(\Lambda))=0$ by Proposition 5.1(1). It follows that
$A$ is $n$-torsionfree if and only if $D(A)$ is $n$-cotorsionfree.
\end{proof}

Note that a module in $\mod \Lambda^{op}$ is Gorenstein flat (see \cite{EJ} for the definition) if and only if it
is Gorenstein projective by \cite[Proposition 1.3]{BM}. So the ordinary Matlis duality $D$ between $\mod \Lambda^{op}$
and $\mod \Lambda$ induces a duality between
Gorenstein projective modules in $\mod \Lambda^{op}$ and Gorenstein injective modules in $\mod \Lambda$ (c.f. \cite[Theorem 3.6]{H}).
Then by \cite[Proposition 10.2.6]{EJ} and Proposition 5.1, we immediately have the following

\begin{cor} \label{cor: 5.2}  For a module $A\in \mod \Lambda$, the following statements are equivalent.
\begin{enumerate}
\item $A$ is $\infty$-cotorsionfree and $\infty$-cospherical.
\item There exists a totally $\add D(\Lambda)$-acyclic complex
$\bf{I}$ (as in Definition 2.7) such that $A\cong \Im(I_0\rightarrow I^{0}).$
\item $A$ is Gorenstein injective.
\end{enumerate}
\end{cor}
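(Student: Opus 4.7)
The plan is to reduce the corollary to \cite[Proposition 10.2.6]{EJ}---which gives the analogous characterization of Gorenstein projective modules in $\mod \Lambda^{op}$---by transporting the three conditions across the Matlis duality $D$ using Proposition 5.1.

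First I would establish (1)$\Leftrightarrow$(3). Theorem 3.8 applied to the semidualizing bimodule $C = D(\Lambda)$ rewrites (1) as $A \in \mathcal{B}_{D(\Lambda)}(\Lambda)$. Proposition 5.1(4) converts the $\infty$-cotorsionfreeness half into $\infty$-torsionfreeness of $D(A)$ in $\mod \Lambda^{op}$, while a standard Hom--Tor adjointness (exploiting that $D$ is exact and faithful on $\mod \Lambda$) converts $\infty$-cosphericity of $A$---i.e.\ the vanishing $\Ext_\Lambda^{\geq 1}(D(\Lambda), A) = 0$---into $\Ext_{\Lambda^{op}}^{\geq 1}(D(A), \Lambda) = 0$. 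The conjunction of these two conditions is precisely the Auslander--Bridger characterization of total reflexivity of $D(A)$, i.e.\ of $D(A)$ being Gorenstein projective in $\mod \Lambda^{op}$, supplied by \cite[Proposition 10.2.6]{EJ}. The duality between Gorenstein projectives in $\mod \Lambda^{op}$ and Gorenstein injectives in $\mod \Lambda$ recalled just before the corollary then delivers (3).

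For (2)$\Leftrightarrow$(3), I would use that over an artin algebra $\Lambda$ the class $\add D(\Lambda)$ coincides with the class of finitely generated injective left $\Lambda$-modules, so $D$ sends $\add \Lambda$ in $\mod \Lambda^{op}$ to $\add D(\Lambda)$ in $\mod \Lambda$, and vice versa. Starting from a totally $\add \Lambda$-acyclic complex of finitely generated projectives witnessing that $D(A)$ is Gorenstein projective (again by \cite[Proposition 10.2.6]{EJ}), the exact functor $D$ produces a totally $\add D(\Lambda)$-acyclic complex in $\mod \Lambda$ whose middle differential has image $A$; the converse runs this construction in reverse.

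The main obstacle is purely bookkeeping: one has to match the $\Hom$-exactness conditions appearing in Definition 2.7 on each side of $D$, and verify that the pair (cotorsionfreeness, cosphericity) on $A$ corresponds under Matlis duality precisely to the pair (torsionfreeness, $\Ext^{\geq 1}(-,\Lambda)$-vanishing) on $D(A)$ that characterizes Gorenstein projectivity. Once these translations are in place, the three equivalences fall out formally, matching the authors' assertion that the result is immediate from Proposition 5.1 and \cite[Proposition 10.2.6]{EJ}.
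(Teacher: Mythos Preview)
Your proposal is correct and follows essentially the same approach as the paper: both reduce the statement via Matlis duality to the analogous characterization of Gorenstein projectives in $\mod \Lambda^{op}$, invoking Proposition~5.1 for the cotorsionfree/torsionfree translation and \cite[Proposition~10.2.6]{EJ} for the target characterization. Your write-up is more detailed than the paper's one-line justification (and your side remark about Theorem~3.8 is not needed, though harmless), but the underlying strategy is identical.
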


Recall that $\Lambda$ is called \emph{Gorenstein} if $\id_{\Lambda} \Lambda=\id_{\Lambda^{op}}\Lambda<\infty$.

\begin{cor} \label{cor: 5.3} The following statements are
equivalent for any $n\geq 0$.
\begin{enumerate}
\item $\Lambda$ is Gorenstein with $\id_{\Lambda} \Lambda=\id_{\Lambda^{op}}\Lambda\leq n$.
\item The $n$-cosyzygy of a module in $\mod \Lambda$ and that of a module in $\mod \Lambda^{op}$ are
$\infty$-cotorsionfree.
\item Every module in $\mod \Lambda$ and every module in $\mod \Lambda^{op}$ are quotient modules of
a left $\Lambda$-module and a right $\Lambda$-module with injective dimension at most $n$ respectively.
\end{enumerate}
\end{cor}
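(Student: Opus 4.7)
The plan is to prove the cycle $(1)\Rightarrow (2)\Rightarrow (3)\Rightarrow (1)$, exploiting Matlis duality to pass between $\mod \Lambda$ and $\mod \Lambda^{op}$ and using Corollary 5.2, Proposition 3.10, and Corollary 3.7 as the key tools. For $(1)\Rightarrow (2)$, I would argue via Matlis duality: if $\Lambda$ is Iwanaga--Gorenstein with $\id_\Lambda\Lambda=\id_{\Lambda^{op}}\Lambda\leq n$, then for any $M\in\mod \Lambda$ the $n$-th syzygy $\Omega^n(D(M))$ of $D(M)\in\mod \Lambda^{op}$ is Gorenstein projective (a classical fact: over an Iwanaga--Gorenstein ring with $\id\leq n$ one has $\Gpd L\leq n$ for every finitely generated $L$, so the $n$-th syzygy is Gorenstein projective). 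Since $D$ converts syzygies to cosyzygies and carries Gorenstein projective modules in $\mod \Lambda^{op}$ to Gorenstein injective modules in $\mod \Lambda$ (as noted in the excerpt just before Corollary 5.2), $\coOmega^n(M)\cong D(\Omega^n(D(M)))$ is Gorenstein injective. Corollary 5.2 then yields that $\coOmega^n(M)$ is $\infty$-cotorsionfree, and the same argument in $\mod \Lambda^{op}$ completes (2).

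For $(2)\Rightarrow (3)$, fix $M\in\mod \Lambda$. By (2), $\coOmega^n(M)$ is $\infty$-cotorsionfree, so Proposition 3.10 produces an exact sequence $0\to M\to X\to Y\to 0$ in $\mod \Lambda$ with $X$ $\infty$-cotorsionfree and $\add D(\Lambda)$-$\id_\Lambda Y\leq n-1$; for an artin algebra, $\add D(\Lambda)$ is exactly the class of finitely generated injective left $\Lambda$-modules, so this is the ordinary $\id Y\leq n-1$. Since $X$ is in particular $1$-$C$-cotorsionfree, Corollary 3.7 provides a surjection $W\twoheadrightarrow X$ with $W\in\add D(\Lambda)$, hence $W$ injective. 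Let $Z:=\ker(W\twoheadrightarrow X\twoheadrightarrow Y)$. A straightforward diagram chase shows that the restriction of $W\twoheadrightarrow X$ to $Z$ has image inside $M=\ker(X\to Y)$ and, using surjectivity of $W\twoheadrightarrow X$, is itself surjective onto $M$. Moreover the exact sequence $0\to Z\to W\to Y\to 0$ together with the long exact $\Ext$-sequence $\Ext^{i-1}(-,Y)\to\Ext^i(-,Z)\to\Ext^i(-,W)$ forces $\Ext^i(-,Z)=0$ for $i>n$, because $\id W=0$ and $\id Y\leq n-1$. Thus $Z\twoheadrightarrow M$ with $\id Z\leq n$, and the same construction handles $\mod \Lambda^{op}$.

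Finally, $(3)\Rightarrow (1)$ is immediate: applying (3) to the regular module $\Lambda\in\mod \Lambda$, a surjection $Z\twoheadrightarrow\Lambda$ with $\id Z\leq n$ splits because $\Lambda$ is projective, so $\Lambda$ is a direct summand of $Z$ and $\id_\Lambda\Lambda\leq n$; the symmetric argument in $\mod \Lambda^{op}$ gives $\id_{\Lambda^{op}}\Lambda\leq n$, and then the classical equality of finite one-sided injective dimensions of an artin algebra yields (1). The main obstacle I anticipate is the construction in $(2)\Rightarrow (3)$, where the finite-injective-dimension witness $Z$ must be manufactured by hand from the Proposition 3.10 approximation and the $\add D(\Lambda)$-precover supplied by Corollary 3.7; the remaining implications reduce to standard facts once one notes that $\add D(\Lambda)$ coincides with the finitely generated injectives and that Matlis duality swaps Gorenstein projectivity and Gorenstein injectivity.
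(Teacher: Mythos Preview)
Your argument is correct, and it proceeds along a different route from the paper's own proof. The paper disposes of Corollary~5.3 in one line by invoking Corollary~5.2 together with \cite[Theorem~1.4 and Lemma~3.8]{HH} and the Matlis duality $D$: the cited results of Huang--Huang give the analogous equivalence phrased in terms of $n$-syzygies, $\infty$-torsionfree modules, and submodules of modules of projective dimension at most $n$, and $D$ (via Proposition~5.1 and the remark preceding Corollary~5.2) transports that equivalence verbatim to the present cotorsionfree/cosyzygy/injective-dimension setting. By contrast, your proof is largely internal to the present paper: the crucial step $(2)\Rightarrow(3)$ is carried out directly in $\mod\Lambda$ using Proposition~3.10 and Corollary~3.7, with a short pullback construction to manufacture the finite-injective-dimension cover $Z$. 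Your approach has the advantage of being self-contained and of illustrating concretely how the approximation machinery of Section~3 yields the Gorenstein characterization; the paper's approach is shorter but outsources the work to \cite{HH}. Two small remarks: Proposition~3.10 is stated only for $n\geq 1$, so for $n=0$ you should simply note that $(2)$ says $M$ itself is $\infty$-cotorsionfree and appeal to Corollary~3.7 directly; and you should observe (as is routine over an artin algebra) that the constructions in Propositions~3.6 and~3.10 remain inside $\mod\Lambda$, with $\Add_RC$ replaced by $\add D(\Lambda)$.
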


\begin{proof} By Corollary 5.2 and \cite[Theorem 1.4 and Lemma 3.8]{HH}, using the duality functor $D$
we get the assertion. \end{proof}




The following example illustrates that the condition
``$\infty$-cotorsionfree" in Corollary 5.3(2) can not be replaced by ``$n$-cotorsionfree".

\begin{exa} \label{exa: 5.5} Let $\Lambda$ be a finite-dimensional algebra
over an algebraically closed field given by the quiver:
\[\xymatrix{
\cdot \ar@(dl,ul)[]^{\alpha}
\ar@(dr,ur)[]_{\beta}   }\]modulo the ideal generated by $\{\alpha^2, \beta^2, \alpha\beta,
\beta\alpha\}$. Then $\Lambda$ is not Gorenstein, but for any $A\in\mod \Lambda$, $\coOmega^{1}(A)$ is
1-cotorsionfree.
\end{exa}

\begin{cor} \label{cor: 5.5} If both $R$ and $\Lambda$ are
local, then the following statements are equivalent.
\begin{enumerate}
\item $\Lambda$ is Gorenstein.
\item $\Lambda$ is self-injective.
\item For $A\in\mod \Lambda$ and $B\in\mod \Lambda^{op}$, $D(\Lambda)\otimes_{\Lambda}\cTr A$ and $\cTr B\otimes_{\Lambda}D(\Lambda)$ are Gorenstein injective.
\item For $A\in\mod \Lambda$ and $B\in\mod \Lambda^{op}$, $D(\Lambda)\otimes_{\Lambda}\cTr A$ and $\cTr B\otimes_{\Lambda}D(\Lambda)$ are $\infty$-cotorsionfree.
\end{enumerate}
\end{cor}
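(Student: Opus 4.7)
The plan is to prove the cycle $(1)\Rightarrow(2)\Rightarrow(3)\Rightarrow(4)\Rightarrow(1)$, whose most substantive step is $(4)\Rightarrow(1)$ and which rests on the following key identification valid in the local setting.

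\textbf{Key identification.} Because $\Lambda$ is local, the unique indecomposable injective in $\mod\Lambda$ is $E(S)=D(\Lambda)$, where $S$ is the unique simple left $\Lambda$-module; hence every injective module appearing in a minimal injective resolution in $\mod\Lambda$ belongs to $\add_{\Lambda} D(\Lambda)$. Given $A\in\mod\Lambda$ with minimal injective resolution $0\to A\to I^0\xrightarrow{f^0} I^1\to\cdots$, the defining right exact sequence $I^0_*\xrightarrow{f^0_*}I^1_*\to\cTr A\to 0$ yields, on tensoring with $D(\Lambda)\otimes_\Lambda-$ and invoking Lemma 2.5(1) (each $\theta_{I^k}$ is an isomorphism since $I^k\in\add_\Lambda D(\Lambda)$) together with naturality of $\theta$, the identification
\[
D(\Lambda)\otimes_\Lambda\cTr A\;\cong\;\Coker f^0\;\cong\;\coOmega^{2}(A).
\]
A symmetric argument gives $\cTr B\otimes_\Lambda D(\Lambda)\cong\coOmega^2(B)$ for every $B\in\mod\Lambda^{op}$.

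\textbf{Step $(4)\Rightarrow(1)$.} Under (4), the identification above tells us that $\coOmega^2(A)$ is $\infty$-cotorsionfree for every $A\in\mod\Lambda$ and that $\coOmega^2(B)$ is $\infty$-cotorsionfree for every $B\in\mod\Lambda^{op}$. Applying Corollary 5.3 with $n=2$, we conclude that $\Lambda$ is Gorenstein with $\id_\Lambda\Lambda=\id_{\Lambda^{op}}\Lambda\leq 2$, giving (1).

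\textbf{Step $(1)\Rightarrow(2)$.} I would invoke the known fact that a local Artin algebra is Gorenstein if and only if it is self-injective. The thread of the argument is as follows: dualizing a minimal injective resolution of $\Lambda_\Lambda$ of length $n=\id_{\Lambda^{op}}\Lambda$ yields a projective resolution of $_\Lambda D(\Lambda)$ of length $n$, so $\pd_\Lambda D(\Lambda)=n$. For local $\Lambda$, every projective is a power of $\Lambda$, so $\pd_\Lambda D(\Lambda)=0$ would force $D(\Lambda)\cong\Lambda$ (compare $R$-lengths), i.e.\ self-injectivity; the Nakayama-permutation / socle argument for Gorenstein Artin algebras then rules out $n\geq 1$ in the local case.

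\textbf{Step $(2)\Rightarrow(3)$.} If $\Lambda$ is self-injective then the classes of projective and injective modules in $\mod\Lambda$ coincide. Splicing a projective resolution with an injective resolution of any $M\in\mod\Lambda$ therefore produces a totally $\add_\Lambda D(\Lambda)$-acyclic complex of injective modules with $M$ as a middle image, so every module in $\mod\Lambda$ is Gorenstein injective; in particular so are $D(\Lambda)\otimes_\Lambda\cTr A$ and $\cTr B\otimes_\Lambda D(\Lambda)$.

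\textbf{Step $(3)\Rightarrow(4)$.} This is immediate from Corollary 5.2, which states that a Gorenstein injective module in $\mod\Lambda$ is both $\infty$-cotorsionfree and $\infty$-cospherical.

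The main obstacle is the ring-theoretic implication $(1)\Rightarrow(2)$: pinning down that the local hypothesis really does force a Gorenstein Artin algebra to be self-injective requires the classification/Nakayama-permutation argument sketched above, which is the one place the local assumption (as opposed to just the Artin hypothesis) is essential; the remaining steps are formal consequences of the material developed in Sections 3--5.
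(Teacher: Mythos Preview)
Your proof is correct and follows essentially the same route as the paper: the cycle $(1)\Rightarrow(2)\Rightarrow(3)\Rightarrow(4)\Rightarrow(1)$, with the key identification $D(\Lambda)\otimes_\Lambda\cTr A\cong\coOmega^{2}(A)$ feeding into Corollary~5.3 for $(4)\Rightarrow(1)$, Corollary~5.2 for $(3)\Rightarrow(4)$, and an external result for $(1)\Rightarrow(2)$ (the paper simply cites \cite[Corollary~2.15]{Ra}). One small remark: your justification of the key identification via the local hypothesis and Lemma~2.5(1) is a detour---Lemma~2.5(2) already gives that $\theta_I$ is an isomorphism for \emph{every} injective $I$, so $D(\Lambda)\otimes_\Lambda\cTr A\cong\coOmega^{2}(A)$ holds over any artin algebra, and locality is genuinely needed only in the step $(1)\Rightarrow(2)$.
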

\begin{proof} $(1) \Rightarrow (2)$ follows from \cite[Corollary 2.15]{Ra}, and
$(3)\Rightarrow (4)$ follows from Corollary 5.2.

Note that $D(\Lambda)\otimes_{\Lambda}\cTr A$
(resp. $\cTr B\otimes_{\Lambda}D(\Lambda)$) is isomorphic to the $2$-cosyzygy
of $A$ (resp. $B$). So both $(4) \Rightarrow (1)$ and $(2)\Rightarrow (3)$ follow from Corollaries 5.3 and 5.2.
\end{proof}

Let $\mathcal {GI}$ denote the class of finitely generated Gorenstein injective left
$\Lambda$-modules. We write $^{\bot}\mathcal{GI}=\{M\in
\mod \Lambda\mid \Ext_{\Lambda}^{\geq 1}(M,X)=0$ for any $X\in \mathcal{GI}\}$
and $(^{\bot}\mathcal{GI})^{\bot}=\{M\in \mod \Lambda\mid
\Ext_{\Lambda}^{\geq 1}(M,Y)=0$ for any $Y\in {^{\bot}\mathcal{GI}}\}$.

\begin{lem} \label{lem: 5.6} Let $0\rightarrow L\rightarrow M\rightarrow N\rightarrow 0$
be an exact sequence in $\mod \Lambda$.
If $L, M\in{^\bot\mathcal{GI}}$, then $N \in{^\bot\mathcal{GI}}$.
\end{lem}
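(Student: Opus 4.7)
The plan is to show that for every $Y\in\mathcal{GI}$ and every $i\geq 1$ one has $\Ext_\Lambda^i(N,Y)=0$. Applying $\Hom_\Lambda(-,Y)$ to the given short exact sequence produces the long exact sequence
$$\cdots\to\Ext_\Lambda^{i-1}(L,Y)\to\Ext_\Lambda^i(N,Y)\to\Ext_\Lambda^i(M,Y)\to\cdots.$$
For $i\geq 2$, both neighboring terms vanish by hypothesis, so $\Ext_\Lambda^i(N,Y)=0$ comes for free. Hence the entire content of the lemma is the case $i=1$, where the long exact sequence reduces to
$$\Hom_\Lambda(M,Y)\to\Hom_\Lambda(L,Y)\to\Ext_\Lambda^1(N,Y)\to\Ext_\Lambda^1(M,Y)=0,$$
so the task becomes to prove that every morphism $f\colon L\to Y$ extends along $L\hookrightarrow M$.

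To produce such an extension, I would exploit the structure of Gorenstein injective modules given by Definition 2.7: $Y$ appears as $\Im(I_0\to I^0)$ in a totally acyclic complex of injectives $\cdots\to I_1\to I_0\to I^0\to I^1\to\cdots$. Acyclicity yields a short exact sequence $0\to Y_1\to I_0\to Y\to 0$ with $I_0$ injective and $Y_1:=\Im(I_1\to I_0)$ again Gorenstein injective (the same totally acyclic complex witnesses $Y_1\in\mathcal{GI}$, now read one position to the left).

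Now apply $\Hom_\Lambda(L,-)$ to $0\to Y_1\to I_0\to Y\to 0$. Because $I_0$ is injective we have $\Ext_\Lambda^1(L,I_0)=0$, and because $L\in{^\bot\mathcal{GI}}$ and $Y_1\in\mathcal{GI}$ we have $\Ext_\Lambda^1(L,Y_1)=0$; consequently $\Hom_\Lambda(L,I_0)\to\Hom_\Lambda(L,Y)$ is surjective. Lift $f$ to some $\widetilde f\colon L\to I_0$, then use injectivity of $I_0$ to extend $\widetilde f$ along $L\hookrightarrow M$ to a morphism $\widetilde g\colon M\to I_0$, and finally compose with the projection $I_0\twoheadrightarrow Y$ to obtain $g\colon M\to Y$ with $g|_L=f$. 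This produces the desired extension and yields $\Ext_\Lambda^1(N,Y)=0$.

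There is no genuine obstacle in this plan; the only idea beyond routine long-exact-sequence bookkeeping is the observation that the dimension shift built into the Gorenstein injective structure lets us route $f$ through an honest injective, where injectivity can be invoked directly. Everything else is diagram chasing in the two long exact sequences.
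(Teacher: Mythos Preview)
Your proof is correct and uses the same key idea as the paper: for the delicate case $i=1$, exploit the short exact sequence $0\to K\to I_0\to Y\to 0$ with $I_0$ injective and $K\in\mathcal{GI}$ furnished by the Gorenstein injective structure. The paper's execution is a bit more direct, though: rather than lifting $f\colon L\to Y$ through $I_0$ and extending by injectivity, it simply applies $\Hom_\Lambda(N,-)$ to that sequence to obtain $\Ext_\Lambda^1(N,Y)\cong\Ext_\Lambda^2(N,K)$, which vanishes by the already-established case $i\geq 2$---so the explicit morphism chase is replaced by a one-line dimension shift in the second variable.
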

\begin{proof} By the dimension shifting, we have $\Ext_{\Lambda}^{\geq 2}(N,A)=0$ for any $A\in \mathcal{GI}$.
Now let $X\in \mathcal{GI}$. It suffices to prove $\Ext_{\Lambda}^1(N,X)=0$.
By Corollary 5.2 there exists an exact sequence $0\rightarrow K\rightarrow I_0\rightarrow X\rightarrow 0$ in $\mod \Lambda$ with
$I_0\in \add D(\Lambda)$ and $K\in \mathcal{GI}$. So
$\Ext_{\Lambda}^1(N,X)\cong \Ext_{\Lambda}^2(N,K)=0$.
\end{proof}

Let $\mathcal{X}$ be a full subcategory of an abelian category $\mathcal{A}$.
We write $^\bot\mathcal{X}=\{M\in \mathcal{A}\mid
\Ext_{\mathcal{A}}^{\geq 1}(M,X)=0$ for any $X\in \mathcal{X}\}$ and $\mathcal{X}^\bot=\{M\in
\mathcal{A}\mid \Ext_{\mathcal{A}}^{\geq 1}(X,M)=0$ for any $X\in \mathcal{X}\}$.
Recall that a pair of subcategories $(\mathcal{X},\mathcal{Y})$ of an abelian category $\mathcal{A}$ is
called a \emph{cotorsion pair} if $\mathcal{X}={^{\bot}\mathcal{Y}}$ and $\mathcal{Y}={\mathcal{X}^{\bot}}$.
We denote by $\GInj(\Lambda)$ the subclass of $\Mod \Lambda$ consisting of Gorenstein injective modules,
and write $^\bot\GInj(\Lambda)=\{M\in \Mod \Lambda\mid
\Ext_{\Lambda}^{\geq 1}(M,X)=0$ for any $X\in \GInj(\Lambda)\}$. It is known that ($^\bot\GInj(\Lambda)$, $\GInj(\Lambda)$)
forms a cotorsion pair in $\Mod \Lambda$ (\cite{BM}).
The following result gives an equivalent characterization when $({^\bot\mathcal{GI}},\mathcal{GI})$ forms a cotorsion pair
in $\mod \Lambda$.

\begin{thm} \label{thm: 5.7} The following statements are
equivalent.
\begin{enumerate}
\item ${(^\bot\mathcal{GI})^\bot}=\mathcal{GI}$ (that is, $({^\bot\mathcal{GI}},\mathcal{GI})$ forms a cotorsion pair).
\item Every module in $(^\bot\mathcal{GI})^\bot$ is 1-cotorsionfree.
\item Every module in $(^\bot\mathcal{GI})^\bot$ is $\infty$-cotorsionfree.
\end{enumerate}
\end{thm}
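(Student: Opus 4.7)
The plan is to prove the cycle $(1)\Rightarrow(3)\Rightarrow(2)\Rightarrow(1)$. For $(1)\Rightarrow(3)$, if $(^\bot\mathcal{GI})^\bot=\mathcal{GI}$ then every $M\in(^\bot\mathcal{GI})^\bot$ is Gorenstein injective and hence $\infty$-cotorsionfree by Corollary 5.2. The implication $(3)\Rightarrow(2)$ is immediate since every $\infty$-cotorsionfree module is in particular $1$-cotorsionfree.

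The substantive direction is $(2)\Rightarrow(1)$. Since $\mathcal{GI}\subseteq(^\bot\mathcal{GI})^\bot$ holds automatically, it suffices to take $M\in(^\bot\mathcal{GI})^\bot$ and show $M\in\mathcal{GI}$. Via Corollary 5.2, this reduces to checking that $M$ is both $\infty$-cospherical and $\infty$-cotorsionfree. The $\infty$-cospherical condition $\Ext_\Lambda^{\geq 1}(D(\Lambda),M)=0$ is free: the module $D(\Lambda)$ is injective, and any injective lies in $^\bot\mathcal{GI}$ because the totally $\add D(\Lambda)$-acyclic complex supplied by Corollary 5.2 is $\Hom_\Lambda(\add D(\Lambda),-)$-exact, which forces $\Ext_\Lambda^{\geq 1}(I,Y)=0$ for every injective $I$ and every $Y\in\mathcal{GI}$.

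For the $\infty$-cotorsionfree condition, I will build a proper $\add D(\Lambda)$-resolution of $M$ by iteration. By (2) and Corollary 3.7 there is a short exact sequence $0\rightarrow K_1\rightarrow W_0\rightarrow M\rightarrow 0$ with $W_0\in\add D(\Lambda)$. The plan is to verify $K_1\in(^\bot\mathcal{GI})^\bot$, then re-apply (2) to $K_1$ to extend the resolution, and iterate. Once the infinite proper $\add D(\Lambda)$-resolution $\cdots\rightarrow W_1\rightarrow W_0\rightarrow M\rightarrow 0$ has been assembled, Proposition 3.6 certifies $M$ as $n$-cotorsionfree for every $n$, completing the verification.

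The main obstacle is the verification that $K_1\in(^\bot\mathcal{GI})^\bot$. For any $X\in{^\bot\mathcal{GI}}$, since $W_0\in\mathcal{GI}$ and $X\in{^\bot\mathcal{GI}}$ one has $\Ext_\Lambda^{\geq 1}(X,W_0)=0$, and together with $\Ext_\Lambda^{\geq 1}(X,M)=0$ the long exact Ext sequence and dimension shifting immediately yield $\Ext_\Lambda^{\geq 2}(X,K_1)=0$. The delicate case is $\Ext_\Lambda^1(X,K_1)=\Coker(\Hom_\Lambda(X,W_0)\rightarrow\Hom_\Lambda(X,M))$; its vanishing amounts to lifting every morphism $X\rightarrow M$ through $W_0\rightarrow M$, which is not automatic from the $\add D(\Lambda)$-precover property of $W_0$. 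Closing this gap is the crux: one must either arrange $W_0$ so as to ensure the stronger ${^\bot\mathcal{GI}}$-precover condition, or invoke Lemma 5.6 together with the resolving-type closure properties of ${^\bot\mathcal{GI}}$ (under extensions, kernels of epimorphisms, and cokernels of monomorphisms) to force the required lift and secure the inductive step.
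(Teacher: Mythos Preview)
Your overall strategy is sound and essentially parallels the paper's, modulo cycling through the equivalences in a different order (the paper proves $(1)\Rightarrow(2)\Rightarrow(3)\Rightarrow(1)$, placing the iterative resolution argument in $(2)\Rightarrow(3)$ rather than in $(2)\Rightarrow(1)$). However, you have explicitly left the key step unfinished: you correctly isolate $\Ext_\Lambda^1(X,K_1)=0$ as the crux, but you stop short of actually proving it, merely gesturing toward Lemma~5.6 and ``resolving-type closure properties.'' This is a genuine gap, and neither of your two proposed fixes works as stated: one cannot in general upgrade the $\add D(\Lambda)$-precover $W_0\to M$ to a ${^\bot\mathcal{GI}}$-precover, and closure properties of ${^\bot\mathcal{GI}}$ alone do not produce the required lift of an arbitrary $X\to M$.

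Here is how the paper closes the gap. Given $X\in{^\bot\mathcal{GI}}$, take the cosyzygy sequence $0\to X\to I^0(X)\to\coOmega^1(X)\to 0$ and apply $\Hom_\Lambda(-,K_1)$ to obtain
\[
\Ext_\Lambda^1(I^0(X),K_1)\longrightarrow\Ext_\Lambda^1(X,K_1)\longrightarrow\Ext_\Lambda^2(\coOmega^1(X),K_1).
\]
The left term vanishes precisely because $I^0(X)\in\add D(\Lambda)$: the $\add D(\Lambda)$-precover property of $W_0\to M$ \emph{does} give the lift for $I^0(X)$, so $\Hom_\Lambda(I^0(X),W_0)\to\Hom_\Lambda(I^0(X),M)$ is onto, and together with $\Ext_\Lambda^1(I^0(X),W_0)=0$ this forces $\Ext_\Lambda^1(I^0(X),K_1)=0$. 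For the right term, Lemma~5.6 gives $\coOmega^1(X)\in{^\bot\mathcal{GI}}$, and then your own dimension shift $\Ext_\Lambda^2(\coOmega^1(X),K_1)\cong\Ext_\Lambda^1(\coOmega^1(X),M)=0$ applies. Hence $\Ext_\Lambda^1(X,K_1)=0$ and the inductive step goes through. The missing idea, in short, is not to attempt lifting an arbitrary $X$ directly, but to reduce to its injective hull $I^0(X)$---for which the precover property suffices---and absorb the discrepancy via the cosyzygy together with Lemma~5.6.
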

\begin{proof} $(1)\Rightarrow (2)$ follows from Corollary 5.2.

$(2)\Rightarrow (3)$ Let $A\in (^\bot\mathcal{GI})^\bot$. Then $A$ is
1-cotorsionfree by assumption. So there exists a $\Hom_{\Lambda}(\add D(\Lambda),-)$-exact exact sequence
$0\rightarrow K\rightarrow I_{0}\rightarrow A\rightarrow 0$ in $\mod \Lambda$ with
$I_0\in \add D(\Lambda)$ by Proposition 3.6. We claim that $K\in(^\bot\mathcal{GI})^\bot$. Let
$Y\in{^\bot\mathcal{GI}}$. Then $\Ext_{\Lambda}^i(Y,K)\cong
\Ext_{\Lambda}^{i-1}(Y,A)$ for any $i\geq 2$. Note that $\coOmega^{1}(Y)\in{^\bot\mathcal{GI}}$
by Lemma 5.6. Then from the exact sequence
$\Ext^1_{\Lambda}(I^0(Y),K)\rightarrow
\Ext^1_{\Lambda}(Y,K)\rightarrow
\Ext^2_{\Lambda}(\coOmega^{1}(Y),K)$ we get that
$\Ext_{\Lambda}^1(Y,K)=0$. The claim follows. So $K$ is 1-cotorsionfree by assumption, and
hence $A$ is 2-cotorsionfree by Proposition 3.6. By replacing $A$ by $K$ in the above argument, we get that
$K$ is 2-cotorsionfree and then $A$ is 3-cotorsionfree. Continuing this process, we finally have that
$A$ is $\infty$-cotorsionfree.

$(3)\Rightarrow (1)$ Obviously $\mathcal{GI}\subseteq(^\bot\mathcal{GI})^\bot$.
Now let $A\in (^\bot\mathcal{GI})^\bot$. It suffices to prove that $A$ is Gorenstein injective.
Because $D(\Lambda)\in{^\bot\mathcal{GI}}$,
$\Ext^{\geq 1}_{\Lambda}(D(\Lambda),A)=0$ and $A$ is $\infty$-cospherical. Note that
$A$ is $\infty$-cotorsionfree by assumption.
It follows from Corollary 5.2 that $A$ is Gorenstein injective.
\end{proof}

\begin{prop} \label{prop: 5.8} Let $R$ be a commutative local artin ring and $F$ a free $R$-module with $\rank(F)=2n$.
If there exists an endomorphism $f$ of $F$ such that $f^2=0$ and
$\rank (\Im f)=\rank(\Im f^*)=n$, then $(\Im f)^\vee$ is
$\infty$-cotorsionfree, where $(-)^\vee=\Hom_R(-,I^0(R/J(R)))$.
\end{prop}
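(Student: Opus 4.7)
The plan is to reduce, via Matlis duality together with Proposition 5.1(4), to showing that $\Im f$ itself is $\infty$-torsionfree over $R$, and then to exhibit a complete resolution of $\Im f$ whose terms are copies of $F$ and whose differentials are $f$.

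First, since $R$ is a commutative local artin ring, Matlis duality gives $M^{\vee\vee}\cong M$ for every finitely generated $R$-module $M$. Combined with Proposition 5.1(4), the module $(\Im f)^{\vee}$ is $\infty$-cotorsionfree if and only if $\Im f\cong(\Im f)^{\vee\vee}$ is $\infty$-torsionfree, so it suffices to establish the latter.

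Next, I would exploit the rank hypothesis to deduce $\Im f=\Ker f$ and, dually, $\Im f^{*}=\Ker f^{*}$. Since $f^{2}=0$ we have $\Im f\subseteq \Ker f$; a length count in the artin-local ring $R$ (using $\ell(F)=\ell(\Ker f)+\ell(\Im f)$ together with $\rank(\Im f)=n=\tfrac{1}{2}\rank(F)$) forces $\ell(\Ker f)=\ell(\Im f)$, hence equality. The identical argument applied to $f^{*}$ on $F^{*}$ yields $\Im f^{*}=\Ker f^{*}$.

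Writing $M=\Im f$, the equality $\Im f=\Ker f$ produces a short exact sequence $0\to M\to F\stackrel{f}{\to}M\to 0$, which splices into the doubly infinite complex
$$\cdots\stackrel{f}{\to}F\stackrel{f}{\to}F\stackrel{f}{\to}F\stackrel{f}{\to}F\stackrel{f}{\to}\cdots$$
that is exact with $M$ as its common syzygy. Applying $\Hom_{R}(-,R)$ and invoking $\Im f^{*}=\Ker f^{*}$, the dualized complex is again exact. Thus $M$ admits a totally acyclic complex of finitely generated projectives in which $M$ appears as an image, i.e.\ $M$ is Gorenstein projective; in particular, $\Tr M$ is also Gorenstein projective, so $\Ext_{R}^{i}(\Tr M,R)=0$ for every $i\ge 1$, meaning $M$ is $\infty$-torsionfree.

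The main obstacle is the middle step: correctly translating the hypothesis $\rank(\Im f)=\rank(\Im f^{*})=n$ into the equalities $\Im f=\Ker f$ and $\Im f^{*}=\Ker f^{*}$. Once these are in hand, the construction of the complete resolution and the verification that it remains exact under dualization are essentially formal.
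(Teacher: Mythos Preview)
Your proposal is correct and follows essentially the same approach as the paper: both reduce via Proposition~5.1 to showing that $\Im f$ is $\infty$-torsionfree, deduce $\Im f=\Ker f$ and $\Im f^{*}=\Ker f^{*}$ from the rank (length) hypotheses, and then use the resulting periodic complex to conclude. The only cosmetic differences are that the paper invokes Proposition~5.1(3) directly (rather than 5.1(4) together with $(\Im f)^{\vee\vee}\cong\Im f$) and cites \cite[Theorem~2.17]{Au} in place of your explicit Gorenstein-projective argument.
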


\begin{proof} Since $f^2=0$, there exists a complex $0\rightarrow \Im f
\rightarrow F\stackrel{f}{\rightarrow}F\stackrel{f}{\rightarrow}\cdots$.
Now consider the short exact sequence $0\rightarrow \Ker f
\rightarrow F\rightarrow \Im f\rightarrow 0$. Because rank($\Im f)=
\rank(F)/2$ by assumption, $\rank(\Im f)=\rank(\Ker f)$. Observing that $\Im f
\subseteq\Ker f$, so $\Im f=\Ker f$. Thus the above complex is exact.
In a similar way, we also get that $\Im f^*=\Ker f^*$. Hence $\Im f$
is $\infty$-torsionfree by \cite[Theorem 2.17]{Au}. So $(\Im f)^\vee$ is $\infty$-cotorsionfree
by Proposition 5.1.
\end{proof}

We give an example to illustrate Proposition 5.8.

\begin{exa} \label{exa: 5.9} Let $k$ be a field and $S=k[[X]]$ and $R=S/(X^2)$, and let $F=
R^2$ and $f:$ $R^2\rightarrow R^2$ a map given by the matrix:
\begin{equation}
\left(
\begin{array}{cccc}
x &  0 \\
x &  x \\
\end{array}
\right).
\end{equation}
Then $(\Im f)^\vee$ is a non-injective $\infty$-cotorsionfree module.
\end{exa}
\begin{proof}
$R$ has a basis consisting of the following 2 elements: 1, $x$,
where $x$ denotes the residue class of the variable $X$ modulo the
ideal $<X^2>$. It is easy to check that $\rank(F)=4$ and $f^2=0$.
Since $\Im f$ is generated by the elements: $f(1,0)=(x,x)$,
$f(0,1)=(0,x)$. It is clear that $\rank(\Im f)=2$. Similarly, the
map $f^*$ is given by the transpose of the matrix defining $f$. One
can see that $\rank(\Im f^*)=2$. Notice that $\Im f$ is not
isomorphic to a direct summand of $R^2$. So $\Im f$ is not
projective. Consequently one gets the assertion by Proposition 5.8.
\end{proof}

Huang and Huang raised in \cite{HH} an open question: Is the class of $\infty$-torsionfree modules closed
under kernels of epimorphisms? We will give an example to show that for any $n\geq 2$, neither the class of
$n$-torsionfree modules nor that $\infty$-torsionfree modules is
closed under kernels of epimorphisms in general. Nevertheless, the class of
$1$-torsionfree modules is closed under kernels of epimorphisms,
since every submodule of a 1-torsionfree module is also
1-torsionfree. The following example is due to Jorgensen and \c{S}ega (see \cite{JS}).

\begin{exa} \label{exa: 5.10} Suppose that $R=\mathbb{Q}[V,X,Y,Z]/I$, where $\mathbb{Q}$
is the field of rational numbers and $I$ =
$<V^2,Z^2,XY,VX+2XZ,VY+YZ,VX+Y^2,VY-X^2>$. Let $f: R^2\rightarrow
R^2$ denote the map given by the matrix: \begin{equation} \left(
\begin{array}{cccc}
v &  2x \\
y & z \\
\end{array}
\right),
\end{equation} where $v,x,y,z$ denote the residue classes of the
variables modulo $I$. Take $M=\Coker f$ and $N=\Im f$. Then
there exists an exact sequence $0\rightarrow N\rightarrow R^2\rightarrow
M\rightarrow 0$ such that $M$ is $\infty$-torsionfree and $N$ is not
$n$-torsionfree for any $n\geq 2$.
\end{exa}
\begin{proof} From \cite[Lemma 1.5]{JS} we know that $M$ is
$\infty$-torsionfree. By \cite[Lemma 1.4]{JS} we have a free
presentation $R^2\stackrel{g}{\rightarrow} R^2\rightarrow N\rightarrow
0$ of $N$, where $g$ is given by the following matrix:
\begin{equation} \left(
\begin{array}{cccc}
v & x \\
y & z \\
\end{array}
\right).
\end{equation} Then $\Im g^{*}$ is generated by the following elements:

\indent $g^{*}(1,0)=(v,x)$ $\,\,\,\,\,\,\,\,\,$
$g^{*}(z,0)=(vz,-2^{-1}vx)$ \\
\indent $g^{*}(0,1)=(y,z)$ $\,\,\,\,\,\,\,\,\,$
$g^{*}(0,v)=(vy,vz)$ \\
\indent $g^{*}(v,0)=(0,vx)$ $\,\,\,\,\,\,$
$g^{*}(0,x)=(0,-2^{-1}vx)$ \\
\indent $g^{*}(x,0)=(vx,vy)$ $\,\,$
$g^{*}(0,y)=(-vx,-vy)$ \\
\indent $g^{*}(y,0)=(vy,0)$ $\,\,\,\,\,\,$
$g^{*}(0,z)=(-vy,0)$ \\
One can use a computer algebra software, like Singular (see
\cite{GP}), to verify that $\Ext_R^1(\Im g^{*},R)\neq 0$. Thus
$\Ext_R^2(\Tr N,R)\neq 0$, and therefore $N$ is not $n$-torsionfree
for any $n\geq 2$. The computation of $\Ext_R^1(\Im g^{*},R)$ by
Singular is as follows.

LIB "homolog.lib";

ring $S=0, (V,X,Y,Z)$, dp;

ideal $I=V2, Z2, XY, VX+2XZ, VY+YZ, VX+Y2, VY-X2$;

qring $R=\std(I)$; // define the ring $R$

module $F = [V,X], [2VZ,-VX], [Y,Z], [VY,VZ], [0,VX], [VX,VY],
[VY,0]$;

module $H=1$;

module $E=\Ext(1, \syz(F), \syz(H));$ // compute $\Ext_R^1(\Im
g^{*},R)$

\noindent The output says that the dimension of $\Ext_R^1(\Im
g^{*},R)$ as a vector space is 3.
\end{proof}
By Example 5.10 and Proposition  5.1, we have that the
class of $\infty$-cotorsionfree modules is not closed under
cokernels of monomorphisms in general.

\vspace{0.5cm}

{\bf Acknowledgements.} This research was partially supported by NSFC (Grant No. 11171142), Mathematical Tianyuan Fund
of NSFC (Grant No. 11226058) and NSF of Guangxi Province of China (Grant No. 2013GXNSFBA019005).


\begin{thebibliography}{99}
\bibitem{Au} M. Auslander and M. Bridger, {\it Stabe Module Theory}.
Memoirs Amer. Math. Soc. {\bf 94}, Amer. Math. Soc., Providence, RI,
1969.

\bibitem{AR1} M. Auslander and I. Reiten, {\it Syzygy modules for noetherian rings}. J. Algebra {\bf 183} (1996),
167--185.

\bibitem{AR2} M. Auslander and I. Reiten, {\it Representation theory of artin algebras III. Almost split
sequences}. Comm. Algebra {\bf 3} (1975), 239--294.

\bibitem{AR3} M. Auslander and I. Reiten, {\it $k$-Gorenstein algebras and syzygy modules}. J. Pure Appl. Algebra {\bf 92} (1994), 1--27.

\bibitem{BM} D. Bennis and N. Mahdou, {\it Strongly Gorenstein projective, injective, and
flat modules}. J. Pure Appl. Algebra {\bf 210} (2007), 437--445.

\bibitem{CE} H. Cartan and S. Eilenberg, Homological Algebra,
Reprint of the 1956 original, Princeton Landmarks in Math.,
Princeton Univ. Press, Princeton, 1999.

\bibitem{GP} G.-M. Greuel and G. Pfister, A Singular Introduction to Commutative Algebra.
With contributions by Olaf Bachmann, Christoph Lossen and Hans Sch\"onemann.
With 1 CD-ROM (Windows, Macintosh, and UNIX). Springer-Verlag, Berlin, 2002.

\bibitem{EJ} E.E. Enochs and O.M.G. Jenda, Relative
Homological Algebra. de Gruyter Exp. in Math. {\bf 30}, Walter de
Gruyter, Berlin, New York, 2000.

\bibitem{H} H. Holm, {\it Gorenstein homological dimensions}. J. Pure Appl. Algebra {\bf 189} (2004), 167--193.

\bibitem{HW} H. Holm and D. White, {\it Foxby equivalence over associative rings}. J. Math. Kyoto Univ. {\bf 47} (2007),
781--808.

\bibitem{HH} C.H. Huang and Z.Y. Huang, {\it Torsionfree dimension of
modules and self-injective dimension of rings}. Osaka J. Math.
{\bf 49} (2012), 21--35.

\bibitem{HT} Z.Y. Huang and G.H. Tang, {\it Self-orthogonal modules over
coherent rings}. J. Pure Appl. Algebra {\bf 161} (2001), 167--176.

\bibitem{JS} D.A. Jorgensen and L.M. \c{S}ega, {\it Independence of the total
reflexivity conditions for modules}. Algebr. Represent. Theory {\bf 9} (2006), 217--226.

\bibitem{O} A. Ooishi, {\it Matlis duality and the width of a module}.
Hiroshima Math. J. {\bf 6} (1976), 573--587.

\bibitem{Ra} M. Ramras, {\it Maximal orders over regular local rings of dimension two}.
Trans. Amer. Math. Soc. {\bf 142} (1969), 457-479.

\bibitem{R} C.M. Ringel, {\it The torsionless module of an artin algebra}. Preprint, 2008.

\bibitem{STMY} M. Salimi, E. Tavasoli, P. Moradifar and S. Yassemi,
{\it Syzygy and torsionless modules with respect to a semidualizing module}.
Algebr. Represent. Theory, DOI: 10.1007/s10468-013-9443-x, in press.

\bibitem{STY} M. Salimi, E. Tavasoli and S. Yassemi, {\it $k$-torsionless
modules with finite Gorenstein dimension}. Czechoslovak Math. J. {\bf 137} (2012), 663--672.

\bibitem{SSW} S. Sather-Wagstaff, T. Sharif and D. White, {\it Stability of Gorenstein categories}.
J. London Math. Soc. {\bf 77} (2008), 481--502.

\bibitem{T} R. Takahashi, {\it A new approximation theory which unifies
spherical and Cohen-Macaulay approximations}. J. Pure Appl. Algebra {\bf 208} (2007), 617--634.
\end{thebibliography}
\end{document}